\pdfoutput=1
\documentclass{article}

\usepackage{microtype}
\usepackage{graphicx}
\usepackage{subfigure}
\usepackage{booktabs} 
\usepackage{amsmath,amsthm,amssymb,amsfonts,bbm,bm,math}

\usepackage{mathtools,amssymb,lipsum}

\usepackage{cuted}
\setlength\stripsep{3pt plus 1pt minus 1pt}

\usepackage{hyperref}
\makeatletter
\newcommand*{\rom}[1]{\expandafter\@slowromancap\romannumeral #1@}
\makeatother

\newtheorem{lemm}{Lemma S\hspace{-2.5pt}}

\usepackage[accepted]{icml2018}

\icmltitlerunning{Dissipativity Theory for Accelerating Stochastic Variance Reduction}

\begin{document}

\twocolumn[
\icmltitle{Dissipativity Theory for Accelerating Stochastic Variance Reduction: \\
  A Unified Analysis of SVRG and Katyusha Using Semidefinite Programs}



\icmlsetsymbol{equal}{*}
\begin{icmlauthorlist}
\icmlauthor{Bin Hu}{ed}
\icmlauthor{Stephen Wright}{ed}
\icmlauthor{Laurent Lessard}{ed}
\end{icmlauthorlist}
\icmlaffiliation{ed}{University of Wisconsin--Madison, United States}
\icmlcorrespondingauthor{Bin Hu}{bhu38@wisc.edu}

\icmlkeywords{Machine Learning, ICML}

\vskip 0.3in
]

\printAffiliationsAndNotice{}

\begin{abstract}
Techniques for reducing the variance of gradient estimates used in
stochastic programming algorithms for convex finite-sum problems have
received a great deal of attention in recent years. By leveraging
dissipativity theory from control, we provide a new perspective on two
important variance-reduction algorithms: SVRG and its direct
accelerated variant Katyusha.  Our perspective provides a physically
intuitive understanding of the behavior of SVRG-like methods via
a principle of energy conservation.  The tools discussed
here allow us to automate the convergence analysis of SVRG-like
methods by capturing their essential properties in small semidefinite
programs amenable to standard analysis and computational techniques.  Our approach recovers existing convergence results
for SVRG and Katyusha and generalizes the theory to alternative parameter choices.  We also discuss how our approach complements the linear
coupling technique. Our combination of perspectives leads to
a better understanding of accelerated variance-reduced stochastic
methods for finite-sum problems.
\end{abstract}

\section{Introduction}

Empirical risk minimization (ERM) is a key paradigm in machine
learning \cite{bubeck2015, bottou2016optimization}. Many learning
problems, including ridge regression, logistic regression, and support
vector machines, can be naturally formulated as the following
finite-sum ERM
\begin{align}
\label{eq:minP}
\min_{x\in \R^p} \,\, g(x)\defeq\frac{1}{n} \sum_{i=1}^n f_i(x),
\end{align}
where $g$ is strongly convex.
A standard approach for solving~\eqref{eq:minP} is the stochastic
gradient (SG) method \cite{robbins1951, Bottou2004}.  Recently, a
large family of variance-reduction methods have been developed to
improve the convergence guarantees of SG.  Such methods are typically
classified into the following two categories:
\begin{itemize}
\item[1.]  SVRG-like methods are epoch-based, requiring evaluation of
  a complete gradient $\nabla g(\tilde{x})$ at the beginning of each
  epoch. The epoch length is typically set to be $2n$ but can be
  adaptive. Methods of this type include SVRG
  \cite{johnson2013} and its direct accelerated variant Katyusha
  \cite{katyusha2016}. 
\item[2.] SAGA-like methods do not involve epoch length tuning, and
  include SAG \cite{Roux2012, Schmidt2013}, SAGA \cite{defazio2014},
  Finito \cite{defazio2014finito}, SDCA \cite{shalev2013, Shai2016},
  APCG \cite{lin2014APCG}, SPDC \cite{zhang2017SPDC}, and point-SAGA
  \cite{defazio2016simple}. Due to storage issues, it may be difficult
  to apply SAGA-like methods to general learning problems other than
  linear prediction/classification, since their required storage for general learning tasks scales with the training set size.
\end{itemize}

This paper is motivated by the following two concerns. First, there
has been recent interest in developing a unified, coherent set of
tools for analyzing stochastic finite-sum methods. Traditionally,
convergence proofs for variance-reduction methods have been developed
in a case-by-case manner. More coherent techniques may facilitate the
design of new finite-sum methods in more complicated setups. Recently,
control theory has been used to derive linear matrix inequality (LMI)
conditions that can be used to automate the analysis of a large
family of first-order optimization methods \citep{Lessard2014, hu2017SGD,
  Bin2017COLT, BinHu2017, fazlyab2017analysis}.  Specifically, \citet{Bin2017COLT} has
tailored jump system theory to provide a unified analysis for SAGA,
Finito, and SDCA.  The analysis in \citet{Bin2017COLT} can potentially
be extended to cover other SAGA-like methods, such as SAG, APCG, SPDC,
and point-SAGA. However, as pointed out in \citet{Bin2017COLT}, jump
system theory may not be the most suitable tool for epoch-based
methods. This paper aims in part to bridge the gap between
control-oriented analysis and SVRG-like methods by extending the
deterministic dissipativity theory in \citet{BinHu2017} to a
stochastic setup. The approach of this paper allows us to formulate
semidefinite programs for a unified analysis of SVRG-like methods. Together,
dissipativity theory  and the jump system theory
described in \citet{Bin2017COLT}  provide a complete picture
of how control theory can be used to unify the analysis of stochastic
finite-sum methods.

Second, there is still a need for better understanding of the role of
momentum in the algorithms for the finite-sum problem \eqref{eq:minP}.
Nesterov's accelerated method~\citep{YEN03a} has received a great deal
of attention for its ingenuity and its appealing theoretical and
practical behavior. However, the original convergence rate proof of
Nesterov's accelerated method relies on a technique of estimate
sequences, and is not easy to interpret. Recently, new interpretations
of Nesterov's accelerated method have been proposed from many
different perspectives, for example, linear coupling
\cite{allen2014linear}, geometric descent \citep{bubeck2015geometric},
control theory \cite{Lessard2014, BinHu2017}, continuous-time ODEs
\cite{Su2016, wibisono2016, wilson2016}, and quadratic averaging
\cite{drusvyatskiy2016}.  Among these new developments, linear
coupling is the only one that has been extended to accelerate
variance-reduction methods for the finite-sum problem \eqref{eq:minP};
Katyusha momentum \cite{katyusha2016} is based on this idea. 
 Our
current paper extends the control-oriented approach in
\citet{BinHu2017} to cover accelerated variance-reduction methods.
The linear coupling framework of \citep{allen2014linear,katyusha2016} provides useful and intuitive design guidelines for accelerating optimization methods.
Our control approach complements linear coupling
 by providing a physical
interpretation of accelerated variance-reduction methods, as well as automated
convergence analysis via formulation and solution of small semidefinite programs.
Both linear coupling and our control approach provide useful
perspectives, and each has certain advantages from the viewpoint of
analysis.

Our contributions can be summarized as follows. We present a unified
analysis of SVRG and Katyusha by using the physically intuitive notion
of dissipativity. We prove convergence results for SVRG by solving a
$3 \times 3$ semidefinite program, and show that the existing
convergence result for Katyusha can be recovered and generalized by
solving a $6\times 6$ semidefinite program.  Numerical solutions of
our proposed LMIs can be used to narrow the choices for various
algorithm parameters (such as learning rate, momentum, and epoch
length) at early stages of proof construction.  We also present an
energy-conservation interpretation for variance reduction and
acceleration. Compared with \citet{BinHu2017}, the novelty of the present
paper is the development of several new stochastic supply rate
conditions that depend on the stochastic variance reduction mechanism.

\section{Preliminaries}

\subsection{Notation}
Let $\R$ and $\R_+$ denote the real and nonnegative real numbers, respectively.
We denote the $p\times p$ identity matrix as $I_p$.  The Kronecker
product of two matrices is denoted as $A \otimes B$.  Note hat
$(A\otimes B)^\tp=A^\tp \otimes B^\tp$ and $(A\otimes B)(C\otimes
D)=(AC)\otimes (BD)$ when the matrices have compatible dimensions.  A
differentiable function $f:\R^p\to \R$ is $\sigma$-strongly convex if
$f(x)\ge f(y)+\nabla f(y)^\tp (x-y)+\frac{\sigma}{2} \|x-y\|^2$ for
all $x,y \in \R^p$ and is $L$-smooth if $\|\nabla f(x)-\nabla
f(y)\|\le L \|x-y\|$ for all $x,y\in \R^p$. Note that $f$ is convex if
$f$ is $0$-strongly convex.  We use $x_\star$ to denote a point
satisfying $\nabla f(x_\star)=0$. When $f$ is $L$-smooth and
$\sigma$-strongly convex for some $\sigma>0$, $x_\star$ is unique.

\subsection{Dissipativity Theory for Stochastic Linear Systems}
\label{sec:DisTh}

For completeness, we first review dissipativity
theory for linear time-invariant (LTI) systems with stochastic inputs.
Our development parallels that of \citet[Section 2.2]{BinHu2017},
which reviews dissipativity theory for LTI systems with deterministic
inputs.

Consider an LTI system governed by the state-space model
\begin{align}
\label{eq:sys1}
\xi_{k+1}=A\xi_k+B w_k,
\end{align}
where $\xi_k\in \R^{n_\xi}$ is the state, $w_k\in \R^{n_w}$ is the
input, and $(A,B)$ are constant matrices with compatible
dimensions, i.e. $A\in \R^{{n_\xi}\times n_\xi}$ and $B\in
\R^{{n_\xi}\times n_w}$.  The input sequence $\{w_k\}$ is assumed to
be a stochastic process. 
Intuitively, we can interpret $w_k$ as a stochastic force driving the
state of the LTI model~\eqref{eq:sys1}.  Dissipativity theory
describes how the input forces $w_j$, $j=0,1,2,\dotsc$ drive the
internal energy stored in the states $\xi_k$, $k=0,1,2,\dotsc$.  The
theory hinges on two functions: a {\em supply rate}
$S:\R^{n_\xi}\times \R^{n_w}\to \R$ and a {\em storage function}
$V:\R^{n_\xi}\to \R_+$.  Since $w_k$ is stochastic, we adopt the
following notion of almost sure dissipativity.

\begin{defn}
  The system \eqref{eq:sys1} is almost surely~(a.s.) dissipative with
  respect to the supply rate $S:\R^{n_\xi}\times \R^{n_w}\to \R$ if
  there exists a storage function $V:\R^{n_\xi}\to \R_+$ such that
\begin{align}
\label{eq:DI}
V(\xi_{k+1})- V(\xi_k)\le S(\xi_k, w_k) \,\,\,\mbox{a.s.}
\end{align}
for all $k$. The
inequality \eqref{eq:DI} is called an a.s. dissipation inequality.
\end{defn}

We now discuss physical interpretations for the supply rate $S$, the
storage function $V$, and the dissipation
inequality~\eqref{eq:DI}. The storage function $V$ quantifies the
amount of internal energy stored in the system state $\xi_k$.  The
supply rate function $S$ maps any state/input pair $(\xi, w)$ to a
scalar that characterizes the energy supplied from the input $w$ to
the state $\xi$. (Note that the supply rate can be negative, in which
case the force $w_k$ is extracting energy from the system.) The
a.s. dissipation inequality \eqref{eq:DI} states that there will
always (technically ``a.s.'') be some energy dissipating from the
system \eqref{eq:DI}, and hence the internal energy increase, which is
$V(\xi_{k+1})-V(\xi_k)$, is bounded above by the energy supplied to
the system. The dissipation inequality can be thought of as a
restatement of the energy conservation law.
A useful variant of \eqref{eq:DI} is the exponential dissipation inequality:
\begin{equation}\label{eq:DI1}
V(\xi_{k+1})- \rho^2 V(\xi_k)\le S(\xi_k, w_k) \,\,\,\mbox{a.s.},
\end{equation}
where $0\le \rho \le 1$ is given.  The exponential dissipation inequality \eqref{eq:DI1} just states that at least a fraction $(1-\rho^2)$ of the internal energy will dissipate at every step $k$.

\begin{rem}
\label{rem:remark1}
It is often the case that the driving force $w_k$ depends on the state
$\xi_k$ in some prescribed way, so we know some properties
of the supply rate in advance.  If the supply rate function $S$
satisfies certain bounds, then the dissipation inequality can be used
to obtain convergence guarantees for \eqref{eq:sys1}.  For example, if
we know there exists a positive constant $M$ such that $\mathbb{E}S(\xi_k,
w_k)\le M$ for all $k$, then taking expectation of \eqref{eq:DI1}
leads to the conclusion that $\mathbb{E}V(\xi_{k+1})\le \rho^2
\mathbb{E}V(\xi_k)+M$.  Based on this inequality, one can show that 
$\mathbb{E}V(\xi_k)\le \rho^{2k} V(\xi_0)+\frac{M}{1-\rho^2}$. This
suggests that the state $\xi_k$ linearly converges to a ball
centered at the origin, and the radius of the ball is related to
$\frac{M}{1-\rho^2}$.  Later we will demonstrate that the convergence
of SG can be proved from a dissipation inequality argument of this
type.
\end{rem}
A computational advantage of dissipativity theory is that if the
supply rate $S$ is quadratic, we can search over admissible quadratic
storage functions $V$ by solving a small semidefinite program.  The
following approach is standard in the controls literature.
See \citet{willems72a, willems72b,
    willems2007dissipative} for a more comprehensive treatment of
  dissipativity theory.

\begin{thm}
\label{thm:DI}
Suppose $X_j=X_j^\tp\in \R^{(n_{\xi}+n_w)\times (n_{\xi}+n_w)}$ for $j=1,2,\cdots, J$. Define $S_j:\R^{n_\xi}\times \R^{n_w}\to \R$ as
\begin{align}
\label{eq:supply}
S_j(\xi, w) \defeq \bmat{\xi \\ w}^\tp X_j \bmat{\xi \\ w}.
\end{align}
If there exists a positive semidefinite matrix $P\in \R^{n_{\xi}\times n_{\xi}}$ and non-negative scalars $\lambda_j$ such that
\begin{align}
\label{eq:lmi1}
\bmat{A^\tp P A-\rho^2 P & A^\tp P B \\ B^\tp P A & B^\tp P B}-\sum_{j=1}^J \lambda_j X_j\preceq 0,
\end{align}
then the a.s. exponential dissipation inequality~\eqref{eq:DI1} holds
for all sample paths of~\eqref{eq:sys1} with $V(\xi)\defeq\xi^\tp P
\xi$ and $S(\xi, w)\defeq\sum_{j=1}^J \lambda_j S_j(\xi, w)$. Further assuming that $\mathbb{E}S_j\le \Lambda_j$ for all sample paths of
\eqref{eq:sys1}, the following inequality always holds:
\begin{align}
\label{eq:keyineq}
\mathbb{E} V(\xi_{k+1})\le \rho^2 \mathbb{E}V(\xi_k)+\sum_{j=1}^J \lambda_j \Lambda_j.
\end{align}
\end{thm}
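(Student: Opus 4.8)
The plan is to reduce the a.s.\ dissipation inequality to a pointwise quadratic-form inequality and then pass to expectations. First I would expand $V(\xi_{k+1}) - \rho^2 V(\xi_k)$ using the dynamics \eqref{eq:sys1}: since $V(\xi) = \xi^\tp P \xi$ and $\xi_{k+1} = A\xi_k + B w_k$, a direct computation gives the algebraic identity
\[
V(\xi_{k+1}) - \rho^2 V(\xi_k) = \bmat{\xi_k \\ w_k}^\tp \bmat{A^\tp P A - \rho^2 P & A^\tp P B \\ B^\tp P A & B^\tp P B} \bmat{\xi_k \\ w_k},
\]
which holds for every realization of $(\xi_k, w_k)$, not merely in expectation.

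Next I would invoke the LMI \eqref{eq:lmi1}. Left- and right-multiplying the negative semidefinite matrix in \eqref{eq:lmi1} by $\bmat{\xi_k \\ w_k}^\tp$ and $\bmat{\xi_k \\ w_k}$ yields, along every sample path,
\[
\bmat{\xi_k \\ w_k}^\tp \bmat{A^\tp P A - \rho^2 P & A^\tp P B \\ B^\tp P A & B^\tp P B} \bmat{\xi_k \\ w_k} \le \sum_{j=1}^J \lambda_j \bmat{\xi_k \\ w_k}^\tp X_j \bmat{\xi_k \\ w_k} = \sum_{j=1}^J \lambda_j S_j(\xi_k, w_k).
\]
Combining this with the identity above and setting $S(\xi,w) \defeq \sum_{j=1}^J \lambda_j S_j(\xi,w)$ gives $V(\xi_{k+1}) - \rho^2 V(\xi_k) \le S(\xi_k, w_k)$ for all sample paths, i.e.\ a.s.; since $P \succeq 0$ we have $V(\xi) = \xi^\tp P \xi \ge 0$, so $V$ is a bona fide storage function and \eqref{eq:DI1} holds.

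Finally, for \eqref{eq:keyineq} I would take expectations of the a.s.\ inequality just established. Linearity of expectation together with the assumed bounds $\mathbb{E} S_j \le \Lambda_j$ give $\mathbb{E} V(\xi_{k+1}) - \rho^2 \mathbb{E} V(\xi_k) \le \sum_{j=1}^J \lambda_j \mathbb{E} S_j(\xi_k, w_k) \le \sum_{j=1}^J \lambda_j \Lambda_j$, where nonnegativity of the $\lambda_j$ is used to preserve the direction of the inequality when bounding each term.

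I do not expect a genuine obstacle here: the argument is the standard S-procedure / quadratic-form manipulation from the controls literature (cf.\ \citet[Section 2.2]{BinHu2017}). The only points requiring care are converting the matrix inequality \eqref{eq:lmi1} into a scalar inequality valid for \emph{every} realization of $(\xi_k, w_k)$ — which is what justifies the ``a.s.''\ qualifier without any probabilistic input — and tracking the two sign conditions, $P \succeq 0$ (so that $V$ is nonnegative-valued) and $\lambda_j \ge 0$ (so that the S-procedure combination and the final expectation bound go through).
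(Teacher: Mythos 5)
Your proposal is correct and follows essentially the same route as the paper's proof: expand $V(\xi_{k+1})-\rho^2 V(\xi_k)$ as a quadratic form in $(\xi_k,w_k)$ via the dynamics, sandwich the LMI \eqref{eq:lmi1} with $\bmat{\xi_k^\tp & w_k^\tp}$ to get the pointwise (hence a.s.) dissipation inequality, then take expectations using $\lambda_j\ge 0$ and $\mathbb{E}S_j\le\Lambda_j$. The only difference is that you make explicit the roles of $P\succeq 0$ and of the pointwise validity of the scalar inequality, which the paper leaves implicit.
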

\begin{proof}
It is straightforward to verify
\begin{align*}
\begin{split}
V(\xi_{k+1})&=\xi_{k+1}^\tp P \xi_{k+1}\\
&=(A\xi_k+Bw_k)^\tp P (A\xi_k+Bw_k)\\
&=\bmat{\xi_k \\ w_k}^\tp \bmat{A^\tp P A & A^\tp P B \\ B^\tp P A & B^\tp P B} \bmat{\xi_k \\ w_k}.
\end{split}
\end{align*}
Hence, we can left- and right-multiply \eqref{eq:lmi1} by
$\bmat{\xi_k^\tp & w_k^\tp}$ and $\bmat{\xi_k^\tp & w_k^\tp}^\tp$ to
obtain the desired dissipation inequality. Since $\lambda_j$ is
non-negative, we take expectations of the dissipation inequality and
obtain \eqref{eq:keyineq}.
\end{proof}

\vspace{-0.15in}
If we fix $(A,B,X_j,\rho)$, the condition~\eqref{eq:lmi1} becomes an LMI with decision variables $P$ and $\lambda_j$.
For fixed $(A, B, X_j, \rho)$, the feasibility of \eqref{eq:lmi1} can be numerically tested using semidefinite programs. When applied to analyze stochastic optimization methods, the resulting LMI is typically small, and can also be solved analytically.

If one only wants to construct the dissipation inequality
\eqref{eq:DI1}, there is no need to enforce nonnegativity of
$\lambda_j$. However, we need $\lambda_j \ge 0$ to ensure that the
weighted supply rate $S=\sum_{j=1}^J \lambda_j S_j$ is useful in
convergence analysis.

We will use Theorem~\ref{thm:DI} to unify the analysis of SVRG and
Katyusha. The unified analysis follows four steps.
\begin{enumerate}
\item Rewrite the stochastic optimization methods in the form of a stochastic linear system \eqref{eq:sys1}.
\item Choose matrices $X_j$ in a way that the supply rate functions \eqref{eq:supply} satisfy certain desired properties.  
\item Solve the LMI \eqref{eq:lmi1} to obtain a dissipation inequality that directly yields the so-called one-iteration convergence result.
\item Apply some standard telescoping trick to convert the one-iteration convergence result into a rate bound for the analyzed method.
\end{enumerate}
Step 1 is straightforward. Step 4 has been routinized in the
literature. We will show how to perform Steps 2 and 3 for SVRG and
Katyusha.  Compared with \citet{BinHu2017}, the novelty of the present
paper is the development of several new stochastic supply rate
conditions that depend on the stochastic variance reduction mechanism.

For illustrative purposes, we first recall the LMI analysis for SG
using dissipativity theory.

\subsection{Demonstrative Example: Dissipativity for SG}
To gain some insight, we first rephrase the LMI-based analysis for SG in \citep{hu2017SGD} using dissipativity theory. 
SG uses the following iteration:
\begin{equation}
\label{eq:SG}
x_{k+1}=x_k-\eta \nabla f_{i_k}(x_k),
\end{equation}
where $i_k$ is sampled uniformly from $\{1, 2, \ldots, n\}$ at every
step.  Note that \eqref{eq:SG} is equivalent to
$x_{k+1}-x_\star=x_k-x_\star-\eta \nabla f_{i_k}(x_k)$. Hence we can
define $\xi_k=x_k-x_\star$, $w_k=\nabla f_{i_k}(x_k)=\nabla
f_{i_k}(\xi_k+x_\star)$, $A=I_p$, and $B=-\eta I_p$. Then the SG
iteration \eqref{eq:SG} is equivalent to the LTI model
\eqref{eq:sys1}.
Based on the properties of $f_i$ and $g$, we can choose the supply
rate functions $S_j(\xi, w)$ based on the following lemma.
\begin{lem} \label{lem:4}
Let $g$ be $L$-smooth and $\sigma$-strongly convex with
$\sigma>0$. Suppose $f_i$ is $L$-smooth and convex. Let $x_\star$ be
the point satisfying $\nabla g(x_\star)=~0$.  Define $X_1=\bar{X}_1
\otimes I_p$ and $X_2=\bar{X}_2 \otimes I_p$, where
\begin{align}
\bar{X}_1\defeq\bmat{ 2\sigma & -1 \\  -1 & 0 },\;\; \bar{X}_2\defeq\bmat{ 0 & -L \\  -L & 1}.
\end{align}
Consider $w_k=\nabla f_{i_k}(\xi_k+x_\star)$ where $i_k$ is sampled uniformly.
Define the supply rate  functions $S_1(\xi, w)$ and $S_2(\xi, w)$ using \eqref{eq:supply}.
Then the following supply rate conditions hold
\[
S_1 \le 0, \quad S_2\le \frac{2}{n}\sum_{i=1}^n \norm{\nabla f_i(x_\star)}^2.
\]
\end{lem}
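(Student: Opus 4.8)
The plan is to evaluate the two quadratic supply rates in closed form via the Kronecker structure, then bound each using the regularity assumptions on $g$ and the $f_i$, taking expectation over the uniformly drawn index $i_k$ where needed. Substituting $\xi_k=x_k-x_\star$ and $w_k=\nabla f_{i_k}(x_k)$ into $S_j(\xi_k,w_k)=\bmat{\xi_k\\ w_k}^\tp(\bar X_j\otimes I_p)\bmat{\xi_k\\ w_k}$ and expanding gives
\begin{align*}
S_1(\xi_k,w_k)&=2\sigma\norm{x_k-x_\star}^2-2(x_k-x_\star)^\tp\nabla f_{i_k}(x_k),\\
S_2(\xi_k,w_k)&=\norm{\nabla f_{i_k}(x_k)}^2-2L(x_k-x_\star)^\tp\nabla f_{i_k}(x_k),
\end{align*}
so the task reduces to estimating these scalars. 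Note that $S_1\le 0$ cannot hold pathwise (each $f_{i_k}$ is only convex), so the stated bounds are to be read after taking the conditional expectation over the draw of $i_k$ given $\xi_k$, which is exactly the form $\mathbb{E}S_j\le\Lambda_j$ required by Theorem~\ref{thm:DI}.

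For $S_1$, I would take the conditional expectation over $i_k$; by uniformity $\mathbb{E}[\nabla f_{i_k}(x_k)\mid\xi_k]=\nabla g(x_k)$, so $\mathbb{E}[S_1\mid\xi_k]=2\sigma\norm{x_k-x_\star}^2-2(x_k-x_\star)^\tp\nabla g(x_k)$. Since $g$ is $\sigma$-strongly convex with $\nabla g(x_\star)=0$, adding the two strong-convexity inequalities between $x_k$ and $x_\star$ (equivalently, monotonicity of $x\mapsto\nabla g(x)-\sigma x$) yields $(x_k-x_\star)^\tp\nabla g(x_k)\ge\sigma\norm{x_k-x_\star}^2$, hence $S_1\le 0$.

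For $S_2$, the key step is to peel off the gradient at the optimum: write $\nabla f_{i_k}(x_k)=d+b$ with $d\defeq\nabla f_{i_k}(x_k)-\nabla f_{i_k}(x_\star)$ and $b\defeq\nabla f_{i_k}(x_\star)$. Since $f_{i_k}$ is convex and $L$-smooth, its gradient is $(1/L)$-cocoercive, so $\norm{d}^2\le L\,d^\tp(x_k-x_\star)$. Bounding the squared term by $\norm{d+b}^2\le 2\norm{d}^2+2\norm{b}^2$ and expanding $-2L(x_k-x_\star)^\tp(d+b)$, the $2\norm{d}^2$ and $-2L\,d^\tp(x_k-x_\star)$ contributions cancel, leaving the pathwise estimate
\begin{equation*}
S_2(\xi_k,w_k)\le 2\norm{\nabla f_{i_k}(x_\star)}^2-2L(x_k-x_\star)^\tp\nabla f_{i_k}(x_\star).
\end{equation*}
Taking the conditional expectation over $i_k$ and using $\mathbb{E}[\nabla f_{i_k}(x_\star)\mid\xi_k]=\nabla g(x_\star)=0$ kills the linear term and turns $\mathbb{E}\norm{\nabla f_{i_k}(x_\star)}^2$ into the average, giving exactly $\frac{2}{n}\sum_{i=1}^n\norm{\nabla f_i(x_\star)}^2$.

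I expect the only real obstacle to be the $S_2$ estimate: one must choose the right decomposition of $\nabla f_{i_k}(x_k)$ and balance cocoercivity against Young's inequality so that every term depending on $x_k$ cancels \emph{before} the expectation is taken — a cruder bound retaining $\norm{\nabla f_{i_k}(x_k)}^2$ would not produce a state-independent constant, and this cancellation is precisely what makes $S_2$ usable as a supply rate. The $S_1$ bound is, by contrast, a one-line consequence of strong convexity once the expectation over $i_k$ is in place.
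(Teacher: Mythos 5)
Your proposal is correct and follows essentially the same route as the paper's proof: interpreting the bounds in (conditional) expectation, using strong convexity of $g$ for $S_1$, and combining co-coercivity of each $f_i$ with the Young-type bound $\norm{a+b}^2\le 2\norm{a}^2+2\norm{b}^2$ and the identity $\frac1n\sum_i\nabla f_i(x_\star)=0$ for $S_2$. The only cosmetic difference is that you perform the cancellation pathwise before averaging over $i_k$, whereas the paper takes the conditional expectation first; the ingredients and the resulting bound are identical.
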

\begin{proof}
The proof is given in \citet{hu2017SGD}. For completeness, we include
it in the supplementary material.
\end{proof}

Using Lemma~\ref{lem:4}, we can apply Theorem \ref{thm:DI} to
construct a dissipation inequality for SG. Setting $P=I_p$, the LMI
condition \eqref{eq:lmi1} becomes
\begin{align}
\label{eq:lmiSG}
\bmat{1-\rho^2-2\lambda_1 \sigma & -\eta+\lambda_1+\lambda_2 L \\ -\eta+\lambda_1+\lambda_2 L & \eta^2-\lambda_2}\otimes I_p \preceq 0.
\end{align}
Based on Remark \ref{rem:remark1}, we can show
$\mathbb{E}\norm{x_k-x_\star}^2 \le \rho^{2k} \norm{x_0-x_\star}^2
+\frac{2\lambda_2}{n(1-\rho^2)}\sum_{i=1}^n \norm{\nabla
  f_i(x_\star)}^2$ by finding non-negative $(\lambda_1, \lambda_2,
\rho^2)$ satisfying the above LMI. In fact, the choices
$\lambda_1=\eta-L\eta^2$, $\lambda_2=\eta^2$, and $\rho^2=1-2\lambda_1
\sigma$ suffice, since they make the left-hand side of \eqref{eq:lmiSG} zero.
We thus obtain the conclusion
\begin{multline*}
\mathbb{E}\norm{x_k-x_\star}^2 \le (1-2\sigma \eta+2\sigma L \eta^2)^k \norm{x_0-x_\star}^2 \\
+\frac{\eta}{\sigma(1-L\eta)}\left(\frac{1}{n}\sum_{i=1}^{n} \norm{\nabla f_i(x_\star)}^2\right),
\end{multline*}
which is the standard convergence result for SG
\citep[Theorem~2.1]{needell2014}. Since the supply rate $S_2$
continues to deliver energy into the system, the SG method with a
constant stepsize can only converge to a ball around the optimal
point.  Later we will see that SVRG-like methods adopt different
supply rate functions and eventually reduce their supply energy to
$0$, enabling linear convergence to the optimal point to be proved.

In this paper, we confine our scope to the case of constant learning
rate. For algorithms with time-varying learning rates, one may need to
adopt the dissipativity theory for linear time-varying (LTV)
systems. This theory requires time-varying Lyapunov functions and
infinite-dimensional LMIs. See \citet[Section~4.2]{BinHu2017} for
further discussions of this point.

\section{Dissipation Inequality for SVRG}

In this section, we present a unified LMI-based analysis for SVRG
using dissipativity theory. SVRG iterates as follows.  Let
$\tilde{x}^{0}\in \R^p$ be an arbitrary initial point.  For each epoch
$s=0, 1, \cdots$, we have $x_0^{s}=\tilde{x}^{s}$.  For each $s$, SVRG
performs the following steps for $k=0, 1, \ldots, m-1$:
\[
x_{k+1}^{s}=x_k^{s}-\eta \left(\nabla f_{i_k^{s}} (x_k^{s})-\nabla f_{i_k^{s}} (\tilde{x}^{s})+\nabla g(\tilde{x}^{s})\right),
\]
where $i_k^{s}$ is uniformly sampled from $\{1,2,\ldots, n\}$ in an
IID manner, and $m$ is a prescribed integer determining the epoch
length. A popular choice for $m$ is $m=2n$. At the end of each
epoch $s$, two typical options are available for updating
$\tilde{x}^{s+1}$:
\begin{itemize}
\item Option \rom{1}:  Set $\tilde{x}^{s+1}=x_m^{s}$;
\item Option \rom{2}\footnote{A similar variant with similar analysis
  is to choose $\tilde{x}^{s+1}$ by sampling uniformly from the
  iterates in the last epoch.}: Set $\tilde{x}^{s+1}=\frac{1}{m}
  \sum_{k=0}^{m-1} x_k^{s}$.
\end{itemize}

When analyzing SVRG, one typically needs to show that there exist $0\le \nu <1$ such that 
\begin{align}
\label{eq:RSVRG}
\mathbb{E} V(\tilde{x}^{s+1})\le \nu\, \mathbb{E} V(\tilde{x}^{s}),
\end{align}
where $V(\tilde{x}^s)$ is set to be either
$\norm{\tilde{x}^s-x_\star}^2$ or $g(\tilde{x}^s)-g(x_\star)$.  Since
\eqref{eq:RSVRG} needs to hold for all $s$, we can drop the
superscript $s$ in the so-called one-iteration analysis, and 
write each epoch of  SVRG in the form of the LTI
model~\eqref{eq:sys1}. Specifically, for a fixed $s$, we
have from the SVRG formula above that 
\begin{align}
\label{eq:sysSVRG}
x_{k+1}-x_\star=x_k-x_\star +B w_k, \quad k=0,1,\dotsc,m-1,
\end{align}
where $B=\bmat{-\eta I_p & -\eta I_p}$, and $w_k$ is given as
\begin{align}
\label{eq:SVRG_w}
w_k=\bmat{\nabla f_{i_k}(x_k)-\nabla f_{i_k}(x_\star) \\[2mm] \nabla f_{i_k}(x_\star)-\nabla f_{i_k}(\tilde{x})+\nabla g(\tilde{x})}.
\end{align}
With these choices of $w_k$ and $B$, we can set $\xi_k=x_k-x_\star$
and $A=I_p$ to recast SVRG in the linear model \eqref{eq:sys1}.  Next,
we will show how to construct supply rate functions for SVRG and apply
Theorem \ref{thm:DI} to obtain various rate bounds in the form of
\eqref{eq:RSVRG}. Our analysis recovers the existing bounds for SVRG,
and leads to more general characterizations of the convergence
properties of SVRG. We also give physical interpretations for the
convergence mechanism of SVRG.

\subsection{Warm-up: Dissipativity for SVRG with Option I}

Since we have already rewritten SVRG in the form of the linear model
\eqref{eq:sys1}, we can construct the dissipation inequality
efficiently for SVRG using semidefinite programs in
Theorem~\ref{thm:DI}.  As before there matrices are derived from
propoerties of $f_i$, $i=1,2,\dotsc,n$ and $g$, as we show now.
\begin{lem}
\label{lem:supply_SVRGI}
Suppose that $g$ is $L$-smooth and $\sigma$-strongly convex with
$\sigma>0$, and that $f_i$ is $L$-smooth and convex for
$i=1,2,\dotsc,n$. Suppose that $x_\star$ satisfies $\nabla
g(x_\star)=~0$.  Set $X_j=\bar{X}_j \otimes I_p$, where
$\bar{X}_j$, $j=1,2,3,4$ are defined as follows:
\begin{align}
\begin{split}
\bar{X}_1&=\bmat{ 0 & 0 & 0\\0 & 0 & 0\\ 0 & 0 & 1},\;\; \bar{X}_2=\bmat{ 2\sigma & -1 & -1\\-1 & 0 & 0\\ -1 & 0 & 0},\\
\bar{X}_3&=\bmat{ 0 & -L & 0\\ -L & 2 & 0 \\ 0 & 0 & 0}, \;\; \bar{X}_4=\bmat{0 & 0 & -1\\ 0 & 0 & 0 \\ -1 & 0 & 0}.
 \end{split}
\end{align}
Consider $\xi_k=x_k-x_\star$ and $w_k$ defined by \eqref{eq:SVRG_w}.
Suppose the supply rate $S_j$ is defined by \eqref{eq:supply} for
$j=1, 2, 3, 4$.  Then $\mathbb{E}S_1\le L^2
\mathbb{E}\norm{\tilde{x}-x_\star}^2$, $\mathbb{E} S_2\le 0$,
$\mathbb{E} S_3\le 0$, and $\mathbb{E} S_4=0$.
\end{lem}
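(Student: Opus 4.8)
The plan is to expand each quadratic supply rate $S_j$ against the explicit $3\times 3$ pattern $\bar X_j$ and then exploit the two unbiasedness identities built into the SVRG gradient estimator. Denote the two $p$-blocks of $w_k$ by $u_k \defeq \nabla f_{i_k}(x_k)-\nabla f_{i_k}(x_\star)$ and $v_k \defeq \nabla f_{i_k}(x_\star)-\nabla f_{i_k}(\tilde x)+\nabla g(\tilde x)$, and recall $\xi_k=x_k-x_\star$. Since $X_j=\bar X_j\otimes I_p$, reading off entries gives $S_1=\norm{v_k}^2$, $S_2=2\sigma\norm{\xi_k}^2-2\xi_k^\tp u_k-2\xi_k^\tp v_k$, $S_3=2\norm{u_k}^2-2L\,\xi_k^\tp u_k$, and $S_4=-2\xi_k^\tp v_k$. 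All expectations below are first taken conditionally on the history up to $x_k$ (so that $\xi_k$ and $\tilde x$ are fixed), the unconditional statements of the lemma following by the tower property; I will write $\mathbb{E}$ for this conditional expectation. Because $i_k$ is sampled uniformly and $\nabla g(x_\star)=0$, one has $\mathbb{E}[\nabla f_{i_k}(x_\star)]=\nabla g(x_\star)=0$ and $\mathbb{E}[\nabla f_{i_k}(\tilde x)]=\nabla g(\tilde x)$, hence the crucial identities $\mathbb{E}[v_k]=0$ and $\mathbb{E}[u_k+v_k]=\nabla g(x_k)$.

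With these in hand, three of the four bounds drop out almost immediately. For $S_4$, linearity gives $\mathbb{E}[S_4]=-2\xi_k^\tp\mathbb{E}[v_k]=0$. For $S_2$, $\mathbb{E}[S_2]=2\sigma\norm{\xi_k}^2-2\xi_k^\tp\nabla g(x_k)$, and since $g$ is $\sigma$-strongly convex with $\nabla g(x_\star)=0$, strong monotonicity of $\nabla g$ yields $\xi_k^\tp\nabla g(x_k)=(x_k-x_\star)^\tp(\nabla g(x_k)-\nabla g(x_\star))\ge\sigma\norm{x_k-x_\star}^2$, so $\mathbb{E}[S_2]\le 0$. For $S_3$, I would argue it is nonpositive pathwise, before taking any expectation: co-coercivity of the gradient of the convex $L$-smooth function $f_{i_k}$ gives $\norm{u_k}^2=\norm{\nabla f_{i_k}(x_k)-\nabla f_{i_k}(x_\star)}^2\le L\,\xi_k^\tp u_k$, so $S_3=2\norm{u_k}^2-2L\,\xi_k^\tp u_k\le 0$ and therefore $\mathbb{E}S_3\le 0$.

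The bound on $S_1=\norm{v_k}^2$ is the one place the variance-reduction mechanism is used quantitatively. Writing $Z_i\defeq\nabla f_i(\tilde x)-\nabla f_i(x_\star)$, the identity $\nabla g(x_\star)=0$ gives $v_k=\mathbb{E}[Z_{i_k}]-Z_{i_k}$, a centered vector, so $\mathbb{E}\norm{v_k}^2=\mathbb{E}\norm{Z_{i_k}}^2-\norm{\mathbb{E}[Z_{i_k}]}^2\le\mathbb{E}\norm{Z_{i_k}}^2=\frac{1}{n}\sum_{i=1}^n\norm{\nabla f_i(\tilde x)-\nabla f_i(x_\star)}^2$, and $L$-smoothness of each $f_i$ bounds every summand by $L^2\norm{\tilde x-x_\star}^2$; taking the outer expectation yields $\mathbb{E}S_1\le L^2\mathbb{E}\norm{\tilde x-x_\star}^2$.

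I expect the main obstacle to be nothing more than carefully tracking the conditioning and recognizing that the correction term $v_k$ has zero conditional mean—this is what makes $\mathbb{E}S_4$ exactly $0$ rather than merely nonpositive, and is precisely why, unlike the $S_2$ term for SG in Lemma~\ref{lem:4}, the net supply rate of SVRG can be driven to zero as $\tilde x\to x_\star$. The remaining computations are one-line invocations of strong convexity and gradient co-coercivity; the only slightly substantive estimate is the variance-to-second-moment step for $S_1$.
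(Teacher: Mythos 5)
Your proof is correct and follows essentially the same route as the paper's (which is carried out in the supplementary material via inequalities (S1)--(S7) of Lemma~S1): condition on the history so that $x_k$ and $\tilde{x}$ are fixed, use the zero-mean property of the correction term $v_k$ to get $\mathbb{E}S_4=0$ and to reduce $\mathbb{E}S_2$ to strong monotonicity of $\nabla g$, use co-coercivity of $\nabla f_{i_k}$ pathwise for $S_3$, and use the variance-$\le$-second-moment bound plus $L$-smoothness of each $f_i$ for $S_1$. No gaps.
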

\begin{proof}
It is straightforward to verify that the proposed supply rate
conditions are equivalent to standard inequalities (co-coercivity,
etc) in the literature.
\end{proof}

We will provide more guidelines for supply rate
constructions in the supplementary material.

We now apply Theorem~\ref{thm:DI} to perform LMI-based convergence
analysis for SVRG with Option \rom{1}.

\begin{cor}
Suppose $g$ is $\sigma$-strongly convex and $L$-smooth. In addition,
$f_i$ is assumed to be convex and $L$-smooth.  Let $0\le \rho^2<1$ be
given. If there exist nonnegative scalars $(\lambda_1, \lambda_2,
\lambda_3)$ and another scalar $\lambda_4$ (not necessarily
nonnegative) such that
\begin{align}
\label{eq:lmiSVRGI1}
\bmat{1-\rho^2-2\sigma \lambda_2& \lambda_2-\eta+L\lambda_3 & \lambda_2-\eta+\lambda_4\\ \lambda_2-\eta+L\lambda_3 & \eta^2-2\lambda_3 & \eta^2\\ \lambda_2-\eta+\lambda_4 & \eta^2 & \eta^2-\lambda_1 } \preceq 0,
\end{align}
then SVRG with Option \rom{1} satisfies
\begin{align}
\label{eq:SVRGcon0}
\mathbb{E} \|x_m-x_\star\|^2\le \left(\rho^{2m}+\frac{\lambda_1 L^2}{1-\rho^2}\right) \mathbb{E}\|x_0-x_\star\|^2.
\end{align}
\end{cor}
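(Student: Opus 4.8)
The plan is to instantiate Theorem~\ref{thm:DI} with the one-epoch state-space data of SVRG and the supply rates of Lemma~\ref{lem:supply_SVRGI}, and then telescope over the epoch. First I would take $P=I_p$, together with $A=I_p$ and $B=\bmat{-\eta I_p & -\eta I_p}$ as in \eqref{eq:sysSVRG}. A direct block computation then gives $A^\tp P A-\rho^2 P=(1-\rho^2)I_p$, $A^\tp P B=\bmat{-\eta I_p & -\eta I_p}$, and $B^\tp P B=\bmat{\eta^2 I_p & \eta^2 I_p\\ \eta^2 I_p & \eta^2 I_p}$, while $\sum_{j=1}^4\lambda_j X_j=\bigl(\sum_{j=1}^4\lambda_j\bar{X}_j\bigr)\otimes I_p$ is obtained entrywise from the given $\bar{X}_j$. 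Comparing the two, the generic LMI \eqref{eq:lmi1} collapses \emph{exactly} to the hypothesis \eqref{eq:lmiSVRGI1} once the common Kronecker factor $I_p$ is pulled out. Hence feasibility of \eqref{eq:lmiSVRGI1} produces, via Theorem~\ref{thm:DI}, the a.s.\ exponential dissipation inequality $V(\xi_{k+1})-\rho^2 V(\xi_k)\le S(\xi_k,w_k)$ with $V(\xi)=\xi^\tp\xi=\norm{x_k-x_\star}^2$ and $S=\sum_{j=1}^4\lambda_j S_j$, holding along every sample path for $k=0,1,\dotsc,m-1$.

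One subtlety is that Theorem~\ref{thm:DI} as stated wants all $\lambda_j\ge 0$, whereas the corollary allows $\lambda_4$ to be arbitrary. This is harmless: deriving the a.s.\ dissipation inequality uses only the LMI, not the signs, and the single place the sign would matter is in bounding $\mathbb{E}S=\sum_j\lambda_j\mathbb{E}S_j$; there $\lambda_4$ multiplies $S_4$, and since Lemma~\ref{lem:supply_SVRGI} gives $\mathbb{E}S_4=0$ \emph{with equality}, the term $\lambda_4\mathbb{E}S_4$ vanishes regardless of sign, so the expectation step \eqref{eq:keyineq} of Theorem~\ref{thm:DI} goes through unchanged. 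Applying \eqref{eq:keyineq} with $\Lambda_1=L^2\,\mathbb{E}\norm{\tilde x-x_\star}^2$ (a constant within the fixed epoch) and $\Lambda_2=\Lambda_3=\Lambda_4=0$ from Lemma~\ref{lem:supply_SVRGI}, and using $\lambda_1,\lambda_2,\lambda_3\ge0$, I obtain $\mathbb{E}V(\xi_{k+1})\le\rho^2\,\mathbb{E}V(\xi_k)+\lambda_1 L^2\,\mathbb{E}\norm{\tilde x-x_\star}^2$ for $k=0,\dotsc,m-1$.

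For Option \rom{1} we have $\tilde x=x_0$ throughout the epoch, so the additive term is the constant $M\defeq\lambda_1 L^2\,\mathbb{E}\norm{x_0-x_\star}^2$. Telescoping $\mathbb{E}V(\xi_{k+1})\le\rho^2\mathbb{E}V(\xi_k)+M$ from $k=0$ to $m-1$ gives $\mathbb{E}V(\xi_m)\le\rho^{2m}\mathbb{E}V(\xi_0)+M\sum_{j=0}^{m-1}\rho^{2j}=\rho^{2m}\mathbb{E}\norm{x_0-x_\star}^2+\tfrac{1-\rho^{2m}}{1-\rho^2}\lambda_1 L^2\,\mathbb{E}\norm{x_0-x_\star}^2$. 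Using $1-\rho^{2m}\le 1$ together with $V(\xi_m)=\norm{x_m-x_\star}^2$ and $V(\xi_0)=\norm{x_0-x_\star}^2$ then yields \eqref{eq:SVRGcon0}.

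The main obstacle is bookkeeping rather than any hard estimate: confirming that the generic LMI of Theorem~\ref{thm:DI} specializes precisely to \eqref{eq:lmiSVRGI1} under $P=I_p$, and running the stochastic telescoping cleanly given that (i) $\lambda_4$ need not be nonnegative, which causes no harm only because $\mathbb{E}S_4=0$ is an equality, and (ii) the supply-rate bound on $S_1$ is phrased in terms of $\norm{\tilde x-x_\star}^2$, which for Option \rom{1} is frozen at $\norm{x_0-x_\star}^2$ for the whole epoch and therefore passes through the telescoping as a constant. Note that the corollary only asserts the inequality \eqref{eq:SVRGcon0}; whether the prefactor $\rho^{2m}+\lambda_1 L^2/(1-\rho^2)$ is strictly below $1$ (i.e.\ whether \eqref{eq:RSVRG} holds) is a separate question about parameter choices, handled afterward.
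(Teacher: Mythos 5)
Your proposal is correct and follows essentially the same route as the paper's own proof: set $P=I_p$ so that the generic LMI \eqref{eq:lmi1} specializes to \eqref{eq:lmiSVRGI1}, observe that the sign of $\lambda_4$ is immaterial because $\mathbb{E}S_4=0$ holds with equality (the paper phrases this as replacing $X_4$ by $-X_4$, which is the same point), and then iterate the resulting one-step inequality over the epoch using that $\tilde{x}=x_0$ under Option I. Your write-up is merely more explicit about the geometric-sum telescoping and the bound $1-\rho^{2m}\le 1$, which the paper leaves implicit.
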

\begin{proof}
We choose the supply rate functions $S_j$ for $j=1,2,3,4$ as described
in Lemma \ref{lem:supply_SVRGI} and \eqref{eq:supply}.  Since $A=I_p$,
and $B=\bmat{-\eta I_p & -\eta I_p}$, we can set $P=I_p$ and show
\begin{align*}
\bmat{A^\tp P A-\rho^2 P & A^\tp P B \\ B^\tp P A & B^\tp P
  B}=\bmat{1-\rho^2 & -\eta & -\eta\\ -\eta & \eta^2 & \eta^2 \\ -\eta
  & \eta^2 & \eta^2}\otimes I_p.
\end{align*}
Thus the left-hand side of \eqref{eq:lmiSVRGI1} satisfies
\eqref{eq:lmi1} if $\lambda_4 \ge 0$. If $\lambda_4 <0$, we can
replace $X_4$ by $-X_4$ and $\lambda_4$ by $-\lambda_4$, and
\eqref{eq:lmi1} will hold with $\lambda_4$ now positive.  The
conclusion \eqref{eq:keyineq} is not affected by the change of sign,
since $\mathbb{E} S_4=0$, so we can set $\Lambda_4 =0$ in
\eqref{eq:keyineq}.  We have from the conclusion of
Theorem~\ref{thm:DI} that
$\mathbb{E}V(\xi_{k+1})=\mathbb{E}\|x_{k+1}-x_\star\|^2\le \rho^2
\mathbb{E} \|x_k-x_\star\|^2+\lambda_1
L^2\mathbb{E}\|x_0-x_\star\|^2$. We iterate this inequality over
$k=0,1,\dotsc,m-1$ to obtain the result.
\end{proof}
\vspace{-0.1in}

We can immediately show linear convergence of SVRG with Option \rom{1}  by choosing  $\lambda_1=2\eta^2$, $\lambda_2=\eta-L\eta^2$, $\lambda_3=\eta^2$, $\lambda_4=L\eta^2$, and $\rho^2=1-2\sigma(\eta-L\eta^2)$.  
Then \eqref{eq:lmiSVRGI1} becomes 
\begin{align*}
\bmat{0  & 0 & 0 \\ 0  & -\eta^2 & \eta^2 \\ 0 & \eta^2 & -\eta^2}\preceq 0,
\end{align*}
which is clearly true.
Hence \eqref{eq:RSVRG} holds with $V(\tilde{x}^s)=\norm{\tilde{x}^s-x_\star}^2$ and $\nu$ given by
\begin{align}
\nu= (1-2\eta\sigma(1-\eta L))^m+\frac{\eta L^2}{\sigma (1-\eta L)}. 
\end{align}
This bound slightly improves that of
\citep[Corollary~1]{tan2016barzilai}.  Other bounds under various
assumptions are discussed in the supplementary material.

\begin{rem}
The important physical insight is provided by the supply rate
condition $\mathbb{E}S_1\le L^2
\mathbb{E}\norm{\tilde{x}-x_\star}^2$. Although the supply rate $S_1$
is delivering energy into the system, the energy supplied is bounded
above by $L^2 \mathbb{E}\norm{\tilde{x}-x_\star}^2$, which diminishes
as $\tilde{x}$ approaches $x_\star$.  Eventually, the energy supplied
by $S_1$ cannot overcome dissipation.
\end{rem}

\subsection{LMI Analysis for SVRG with Option \rom{2}}

For SVRG with Option \rom{2}, we require the following supply rate functions.
\begin{lem}
\label{lem:supply_SVRGII}
Suppose that $g$ is $L$-smooth and $\sigma$-strongly convex with
$\sigma>0$, and that each $f_i$ is $L$-smooth and convex. Let
$x_\star$ be the point satisfying $\nabla g(x_\star)=~0$.  Set $X_j=\bar{X}_j \otimes I_p$, where
$\bar{X}_j$, $j=1,2,3$ are defined as:
\begin{align}
\begin{split}
\bar{X}_1&=\bmat{ 0 & 0 & 0\\0 & 1 & 0\\ 0 & 0 & 0},\;\; \bar{X}_2=\bmat{ 0 & 0 & 0\\0 & 0 & 0\\ 0 & 0 & 1},\\
\bar{X}_3&=\bmat{ 0 & -1 & -1\\ -1 & 0 & 0 \\ -1 & 0 & 0}.
 \end{split}
\end{align}
Define $\xi_k=x_k-x_\star$ and define $w_k$ as in \eqref{eq:SVRG_w}.
Suppose the supply rate $S_j$ is defined by \eqref{eq:supply} for
$j=1,2,3$.  Then the following supply rate conditions hold
\begin{subequations}
\begin{align}
\mathbb{E} S_1&\le 2L(\mathbb{E}g(x_k)-g(x_\star)),\\
\mathbb{E} S_2&\le 2L(\mathbb{E}g(\tilde{x})-g(x_\star)),\\
\mathbb{E} S_3&\le -\mathbb{E}g(x_k)+g(x_\star).
\end{align}
\end{subequations}
\end{lem}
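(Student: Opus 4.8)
The plan is to reduce all three bounds to two elementary ingredients and some bookkeeping. First I would unwind the Kronecker structure: writing $v_1=\xi_k=x_k-x_\star$, $v_2=\nabla f_{i_k}(x_k)-\nabla f_{i_k}(x_\star)$, and $v_3=\nabla f_{i_k}(x_\star)-\nabla f_{i_k}(\tilde x)+\nabla g(\tilde x)$ (so that $w_k=\bmat{v_2\\ v_3}$), the quadratic form $S_j(\xi_k,w_k)=\bmat{\xi_k\\ w_k}^\tp(\bar X_j\otimes I_p)\bmat{\xi_k\\ w_k}$ collapses to $S_j=\sum_{a,b}(\bar X_j)_{ab}\,v_a^\tp v_b$, giving $S_1=\norm{v_2}^2$, $S_2=\norm{v_3}^2$, and $S_3=-2\,v_1^\tp(v_2+v_3)$. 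The two analytic facts I would invoke are: (i) for any convex $L$-smooth $h$, $\tfrac1{2L}\norm{\nabla h(x)-\nabla h(y)}^2\le h(x)-h(y)-\nabla h(y)^\tp(x-y)$ for all $x,y$ (apply $L$-smoothness of the convex, $y$-minimized function $z\mapsto h(z)-\nabla h(y)^\tp z$ at the point $x-\tfrac1L\nabla h(x)$; this is the standard Bregman lower bound, e.g. Nesterov's textbook); and (ii) convexity of $g$ together with $\nabla g(x_\star)=0$, i.e. $\nabla g(x)^\tp(x-x_\star)\ge g(x)-g(x_\star)\ge0$ (only convexity of $g$ is needed, not strong convexity). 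Throughout, $\mathbb{E}$ denotes expectation over the uniform index $i_k$ conditioned on $x_k,\tilde x$; the displayed inequalities then follow after one more expectation.

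For $S_1$, I would apply (i) with $h=f_i$, $x=x_k$, $y=x_\star$ and average over $i$, using $\tfrac1n\sum_i\nabla f_i(x_\star)=\nabla g(x_\star)=0$ to get $\mathbb{E}S_1=\mathbb{E}\norm{v_2}^2\le 2L(g(x_k)-g(x_\star))$. For $S_2$, I would first note that $v_3$ has conditional mean zero, since $\mathbb{E}[\nabla f_{i_k}(x_\star)-\nabla f_{i_k}(\tilde x)]=\nabla g(x_\star)-\nabla g(\tilde x)$, so that $v_3$ equals $u_{i_k}-\mathbb{E}u_{i_k}$ with $u_i:=\nabla f_i(x_\star)-\nabla f_i(\tilde x)$; hence $\mathbb{E}\norm{v_3}^2\le\mathbb{E}\norm{u_{i_k}}^2=\mathbb{E}\norm{\nabla f_{i_k}(\tilde x)-\nabla f_{i_k}(x_\star)}^2$ (variance no larger than the raw second moment), and re-running the $S_1$ computation with $x_k$ replaced by $\tilde x$ bounds this by $2L(g(\tilde x)-g(x_\star))$. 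For $S_3$, I would compute $\mathbb{E}(v_2+v_3)=\mathbb{E}(\nabla f_{i_k}(x_k)-\nabla f_{i_k}(\tilde x)+\nabla g(\tilde x))=\nabla g(x_k)$ (the SVRG estimator is unbiased), so $\mathbb{E}S_3=-2(x_k-x_\star)^\tp\nabla g(x_k)\le-2(g(x_k)-g(x_\star))\le-(g(x_k)-g(x_\star))$ by (ii).

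There is no genuine obstacle here; the work is routine once (i) is in hand. The only points requiring care are the bookkeeping between conditional and total expectation, tracking which gradient is evaluated at which of $x_k$, $\tilde x$, $x_\star$, and the harmless slack in the $S_3$ bound (the stated $-g(x_k)+g(x_\star)$ is weaker than the $-2(g(x_k)-g(x_\star))$ one actually obtains, which is what makes the downstream LMI coefficients clean). I would close by taking total expectations to arrive at the three displayed inequalities.
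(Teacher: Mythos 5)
Your proposal is correct and follows essentially the same route as the paper: the three bounds are precisely the standard inequalities \eqref{eq:SQC5}, \eqref{eq:SQC5b}, and \eqref{eq:SQC6} collected in the supplementary lemma (co-coercivity of each convex $L$-smooth $f_i$ anchored at $x_\star$, the variance-of-a-mean-zero-vector bound for the SVRG correction term, and unbiasedness of the estimator combined with convexity of $g$), which you derive explicitly rather than cite. One small remark: your parenthetical that the stated bound $\mathbb{E}S_3\le -(\mathbb{E}g(x_k)-g(x_\star))$ is what makes the downstream coefficients clean is backwards --- the corollary that consumes this lemma actually invokes the sharper bound $\mathbb{E}S_3\le -2(\mathbb{E}g(x_k)-g(x_\star))$ that your argument produces (this is where the $2\lambda_3$ in \eqref{eq:SVRGII1} comes from), so the constant in the lemma as printed is the one that should be adjusted, not your derivation.
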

\begin{proof}
These are also standard inequalities in the literature.  See
\citet{tan2016barzilai} and \citet[Lemma 6.4]{bubeck2015} for more
details. (Further discussions are provided in the supplementary
material.)
\end{proof}

\begin{cor}
  Suppose that $g$ is $\sigma$-strongly convex and $L$-smooth, and
  that each $f_i$ is convex and $L$-smooth.  If there exist
  non-negative scalars $\lambda_j$, $j=1,2,3$, such that
  $\lambda_3-L\lambda_1 > 0$ and
\begin{align}
\label{eq:lmiSVRGII1}
\bmat{0 & \lambda_3-\eta & \lambda_3-\eta\\ \lambda_3-\eta & \eta^2-\lambda_1 & \eta^2\\ \lambda_3-\eta & \eta^2 & \eta^2-\lambda_2 } \preceq 0,
\end{align}
then SVRG with Option \rom{2} satisfies
\begin{multline}
\label{eq:SVRGcon1}
\mathbb{E} g\left(\frac{1}{m}\sum_{k=0}^{m-1} x_k\right)-g(x_\star)\\ \le
\left(\frac{\sigma^{-1}+mL\lambda_2}{(\lambda_3-L\lambda_1)m}\right)(\mathbb{E}g(\tilde{x})-g(x_\star)).
\end{multline}
\end{cor}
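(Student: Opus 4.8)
The plan is to execute Steps~3 and 4 of the recipe following Theorem~\ref{thm:DI}, since Steps~1 and 2 are already in place: each epoch of SVRG is the linear system~\eqref{eq:sys1} with $\xi_k = x_k - x_\star$, $A = I_p$, $B = \bmat{-\eta I_p & -\eta I_p}$ and $w_k$ as in~\eqref{eq:SVRG_w}, and $S_1,S_2,S_3$ are the supply rates of Lemma~\ref{lem:supply_SVRGII}. First I would observe that, with $P = I_p$ and $\rho^2 = 1$, a computation identical to the one in the Option~\rom{1} case gives
\begin{align*}
\bmat{A^\tp P A - \rho^2 P & A^\tp P B \\ B^\tp P A & B^\tp P B} = \bmat{0 & -\eta & -\eta \\ -\eta & \eta^2 & \eta^2 \\ -\eta & \eta^2 & \eta^2}\otimes I_p ,
\end{align*}
so the hypothesis~\eqref{eq:lmiSVRGII1} is exactly condition~\eqref{eq:lmi1} for these choices. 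Theorem~\ref{thm:DI} then supplies the a.s.\ dissipation inequality $\norm{x_{k+1}-x_\star}^2 - \norm{x_k-x_\star}^2 \le \lambda_1 S_1 + \lambda_2 S_2 + \lambda_3 S_3$ for $k = 0,\dots,m-1$. Note that here $\rho^2 = 1$, so there is no within-epoch geometric decay of the quadratic storage function; the contraction will instead come from the sign structure of the averaged supply rates.

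Next I would take expectations and insert the bounds of Lemma~\ref{lem:supply_SVRGII}. Because the $\lambda_j$ are nonnegative, collecting the terms proportional to $\mathbb{E}g(x_k) - g(x_\star)$ yields the one-iteration inequality
\begin{align*}
\mathbb{E}\norm{x_{k+1}-x_\star}^2 - \mathbb{E}\norm{x_k-x_\star}^2 \le -2(\lambda_3 - L\lambda_1)\bigl(\mathbb{E}g(x_k) - g(x_\star)\bigr) + 2L\lambda_2\bigl(\mathbb{E}g(\tilde x) - g(x_\star)\bigr) .
\end{align*}
This is the point at which the analysis departs from the Option~\rom{1} corollary: the right-hand side is not a constant of the form $\sum_j\lambda_j\Lambda_j$, so~\eqref{eq:keyineq} cannot be applied verbatim. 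Instead I would sum this inequality over $k = 0,\dots,m-1$, discard the nonnegative term $\mathbb{E}\norm{x_m-x_\star}^2$ on the left, and use the epoch initialization $x_0 = \tilde x$ together with $\sigma$-strong convexity of $g$ (which gives $\norm{\tilde x - x_\star}^2 \le \tfrac{2}{\sigma}\bigl(g(\tilde x) - g(x_\star)\bigr)$). Rearranging gives
\begin{align*}
2(\lambda_3 - L\lambda_1)\sum_{k=0}^{m-1}\bigl(\mathbb{E}g(x_k) - g(x_\star)\bigr) \le \Bigl(\tfrac{2}{\sigma} + 2mL\lambda_2\Bigr)\bigl(\mathbb{E}g(\tilde x) - g(x_\star)\bigr) .
\end{align*}
Finally, convexity of $g$ yields the Jensen bound $g\bigl(\tfrac1m\sum_{k=0}^{m-1}x_k\bigr) \le \tfrac1m\sum_{k=0}^{m-1}g(x_k)$, so the left-hand side is at least $2(\lambda_3 - L\lambda_1)\,m\,\bigl(\mathbb{E}g(\tfrac1m\sum_{k}x_k) - g(x_\star)\bigr)$; dividing by $2(\lambda_3-L\lambda_1)m>0$ produces~\eqref{eq:SVRGcon1}.

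I expect the telescoping step to be the crux. The difficulty is precisely that the per-iteration supply rate is not bounded by a constant but by a multiple of the ``energy'' $\mathbb{E}g(x_k) - g(x_\star)$, so the geometric-series argument of Remark~\ref{rem:remark1} and of the Option~\rom{1} corollary does not apply; one must exploit the fact that the coefficient $-2(\lambda_3 - L\lambda_1)$ is negative --- i.e.\ the dissipation injected through $S_3$ strictly dominates the energy supplied through $S_1$ --- which is exactly the hypothesis $\lambda_3 - L\lambda_1 > 0$. The strong-convexity bound on the initial state and Jensen's inequality are the two extra ingredients that close the argument. A minor, routine point is that all of the expectations should be read as iterated expectations over the i.i.d.\ sampling within the epoch, conditioned on $\tilde x$; since every inequality used is affine in the relevant expectations, the tower property makes this bookkeeping transparent.
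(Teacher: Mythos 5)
Your proposal is correct and follows essentially the same route as the paper's proof: $P=I_p$, $\rho^2=1$, verification that \eqref{eq:lmiSVRGII1} is exactly \eqref{eq:lmi1} for these choices, the same one-iteration inequality, summation over the epoch, the strong-convexity bound on $\|x_0-x_\star\|^2$, and Jensen's inequality. The only (shared) wrinkle is that the coefficient $-2(\lambda_3-L\lambda_1)$ requires the bound $\mathbb{E}S_3\le -2(\mathbb{E}g(x_k)-g(x_\star))$, which follows from \eqref{eq:SQC6} but is a factor of two stronger than the inequality printed in Lemma~\ref{lem:supply_SVRGII}; the paper's own proof uses the same sharper constant.
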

\begin{proof}
Recall that $A=I_p$, and $B=\bmat{-\eta I_p & -\eta I_p}$ for
the state-space representation of SVRG. Let $S_1$, $S_2$, and
$S_3$ be the supply rate functions defined from $X_1$, $X_2$, and
$X_3$ of Lemma~\ref{lem:supply_SVRGII} via \eqref{eq:supply}.
Setting $P=I_p$ and $\rho=1$, 
the left-hand side of the LMI \eqref{eq:lmi1} becomes
\begin{align*}
\bmat{0 & \lambda_3-\eta & \lambda_3-\eta\\ \lambda_3-\eta & \eta^2-\lambda_1 & \eta^2\\ \lambda_3-\eta & \eta^2 & \eta^2-\lambda_2 }\otimes I_p.
\end{align*}
Since \eqref{eq:lmiSVRGII1} holds, can apply Theorem~\ref{thm:DI} to
show that
\begin{multline*}
\mathbb{E} \|x_{k+1}-x_\star\|^2\le  \mathbb{E}\|x_k-x_\star\|^2\\
-(2\lambda_3-2L\lambda_1)(\mathbb{E}g(x_k)-g(x_\star))+2L\lambda_2(\mathbb{E}g(\tilde{x})-g(x_\star)).
\end{multline*}
We can sum the above inequality from $k=0$ to $m-1$ and show that 
\begin{multline}
\label{eq:SVRGII1}
(2\lambda_3-2L\lambda_1)\sum_{k=0}^{m-1}\left(\mathbb{E}g(x_k)-g(x_\star)\right)\\ \le
 \mathbb{E}\|x_0-x_\star\|^2+2mL\lambda_2 \mathbb{E} (g(\tilde{x})-g(x_\star)).
\end{multline}
By convexity of $g$, we have 
\begin{align*}
g\left(\frac{1}{m}\sum_{k=0}^{m-1} x_k \right)\le \frac{1}{m}\sum_{k=0}^{m-1} g(x_k).
\end{align*}
Since $g$ is $\sigma$-strongly convex, we also have
$\|x_0-x_\star\|^2\le \frac{2}{\sigma}(\mathbb{E}g(x_0)-g(x_\star))$.
By substituting these inequalities into \eqref{eq:SVRGII1}, and using
the assumption $\lambda_3-L\lambda_1 > 0$, we obtain the result.
\end{proof}

We can recover the standard rate result for SVRG by choosing
$\lambda_1=\lambda_2=2\eta^2$, and $\lambda_3=\eta$. We have
$\lambda_3-L\lambda_1=\eta-L\eta^2 \ge 0$ for $\eta \le \frac{1}{L}$,
and \eqref{eq:lmiSVRGII1} becomes
\begin{align*}
\bmat{0  & 0 & 0 \\ 0  & -\eta^2 & \eta^2 \\ 0 & \eta^2 & -\eta^2}\preceq 0,
\end{align*}
which is clearly true.  Additionally, we have
\begin{align}
\frac{\sigma^{-1}+mL\lambda_2}{(\lambda_3-L\lambda_1)m}=\frac{1}{m\sigma\eta(1-2L\eta)}+\frac{2L\eta}{1-2L\eta},
\end{align}
which is exactly the rate in \citep[Theorem~1]{johnson2013}. This
result states that the iteration complexity of SVRG with Option
\rom{2} is
$\mathcal{O}\left((\frac{L}{\sigma}+n)\log(\frac{1}{\epsilon})\right)$
if we choose $m=\frac{20L}{\sigma}$.

\begin{rem}
 Some important physical insight is provided by the supply
  rate condition $\mathbb{E}S_2\le 2L
  \mathbb{E}(g(\tilde{x})-g(x_\star))$.  As $\tilde{x}$ approaches
  $x_\star$, the energy supplied by $S_2$ drops and is unable to
  overcome dissipation, leading to convergence.  One may add more
  supply rate functions and improve the convergence guarantees by some
  constant factor. In principle, the introduction of more supply rate
  functions may reduce the conservatism in the analysis. Other choices of $\lambda_j$ may also change the iteration complexity by a constant factor. In addition, new choices of $m$ may require different choices of $\lambda_j$. Note that LMI \eqref{eq:lmi1}  in Theorem \ref{thm:DI} can be implemented and solved numerically, leading to numerical clues for how to construct $P$ and $\lambda_j$ for proving rate results. Therefore, our proposed
  LMI provides an efficient tool for constructing bounds of the form
  \eqref{eq:SVRGcon1}. 
\end{rem}

\section{Dissipativity Theory for Katyusha}

Katyusha  solves the following problem:
\begin{align}
  \nonumber
\min_{x\in \R^p} \,\, F(x) & \defeq f(x)+\psi(x)\\
\label{eq:minP1}
&= \frac{1}{n} \sum_{i=1}^n f_i(x)+\psi(x),
\end{align}
where $\psi$ is $\sigma$-strongly convex and possibly nonsmooth, while
each $f_i$, $i=1,2,\dotsc,n$ is $L$-smooth and convex.

For each epoch $s=0, 1, \cdots$, we have
$y_0^{s}=z_0^s=\tilde{x}^{s}$.  For any fixed $s$, and positive
parameters $\tau_1$, $\tau_2$, and $\alpha$, Katyusha applies the
following iteration for $k=0, 1, \ldots, m-1$:
\begin{subequations}
\begin{align}
\label{eq:Katyusha}
x_{k+1}^s&=\tau_1 z_k^s+\tau_2 \tilde{x}^s+(1-\tau_1-\tau_2) y_k^s,\\
v_k^s&= \nabla f_{i_k^{s}} (x_{k+1}^{s})-\nabla f_{i_k^{s}} (\tilde{x}^{s})+\nabla f(\tilde{x}^{s}),\\
z_{k+1}^s&=\argmin_z \left\{\frac{1}{2\alpha}\norm{z-z_k^s}^2+(v_k^s)^\tp z+\psi(z)\right\},\\
y_{k+1}^s&=\argmin_y\left\{\frac{3L}{2}\norm{y-x_{k+1}^s}^2+(v_k^s)^\tp y+\psi(y)\right\},
\end{align}
\end{subequations}
where $i_k^{s}$ is uniformly sampled from $\{1,2,\ldots, n\}$ in an
i.i.d. manner, and $m$ is a prescribed integer determining the length
of the epoch. (A popular choice is $m=2n$.) At the end of each epoch
$s$, we set
\begin{align}
\tilde{x}^{s+1}=\left(\sum_{j=0}^{m-1}(1+\sigma\alpha)^j\right)^{-1}\left(\sum_{j=0}^{m-1}(1+\alpha\sigma)^j y_{j+1}^s\right).
\end{align}

\citet{katyusha2016} shows that the iteration complexity for Katyusha
is
$\mathcal{O}\left( \left(\sqrt{\frac{Ln}{\sigma}}+n \right) \log(\frac{1}{\epsilon})\right)$
if one chooses $\tau_2=\frac{1}{2}$, $\tau_1=\min
\{\sqrt{\frac{m\sigma}{3L}},\frac{1}{2}\}$, $\alpha=\frac{1}{3\tau_1
  L}$, and $m=2n$.  The key of the proof is the coupling lemma
\citep[Lemma 3.7]{katyusha2016}, which states the following holds for
Katyusha with $\tau_1\le \frac{1}{3\alpha L}$ and
$\tau_2=\frac{1}{2}$:
\begin{align}
  \nonumber
  &\frac{1+\alpha \sigma}{2}\mathbb{E}\norm{z_{k+1}-x_\star}^2+\frac{\alpha}{\tau_1}\left(\mathbb{E}F(y_{k+1})-F(x_\star)\right)\\
  \nonumber
&-\frac{1}{2}\mathbb{E}\norm{z_k-x_\star}^2-\frac{\alpha(1-\tau_1-\tau_2)}{\tau_1}\left(\mathbb{E}F(y_k)-F(x_\star)\right)\\
\label{eq:coupling}
&\le \frac{\alpha \tau_2}{\tau_1}\left(\mathbb{E}F(\tilde{x})-F_\star\right).
\end{align}
We analyze a single epoch, dropping the superscript  $s$ to
simplify the notation.  As stated in \citep[Section
  3.2]{katyusha2016}, once the above one-iteration convergence result
is established, a telescoping trick can be applied to  show
the improved iteration complexity of Katyusha.
We show how to provide a general proof for \eqref{eq:coupling} using
dissipativity, with Theorem~\ref{thm:DI} again being our main
technical tool. 

\subsection{Katyusha as a Stochastic System}
At a given epoch $s$ (subscript dropped), a single ``inner'' iteration
of Katyusha can be written as follows:
\begin{subequations}
\label{eq:Kat1}
\begin{align}
x_{k+1}&=\tau_1 z_k+\tau_2 \tilde{x}+(1-\tau_1-\tau_2) y_k,\\
v_k&=\nabla f_{i_k}(x_{k+1})-\nabla f_{i_k}(\tilde{x})+\nabla f(\tilde{x}),\\
z_{k+1}&=z_k-\alpha v_k -\alpha g_k,\\
y_{k+1}&= x_{k+1}- \zeta v_k -\zeta h_k,
\end{align}
\end{subequations}
where $g_k$ is some subgradient of $\psi$ evaluated at $z_{k+1}$, and
$h_k$ is some subgradient of $\psi$ evaluated at $y_{k+1}$. We can set
$\zeta=\frac{1}{3L}$ to recover the standard Katyusha iteration
\ref{eq:Katyusha}.

We can rewrite \eqref{eq:Kat1} as 
\begin{align*}
\begin{split}
\bmat{z_{k+1}-x_\star \\ y_{k+1}-x_\star \\ \tilde{x}-x_\star}=A \bmat{z_k-x_\star \\ y_k-x_\star \\ \tilde{x}-x_\star}+B\bmat{v_k \\ g_k \\ h_k },
\end{split}
\end{align*}
where $A=\bar{A} \otimes I_p$ and $B = \bar{B} \otimes I_p$, and
$\bar{A}$ and $\bar{B}$ are defined as follows:
\[
\bar{A}=\bmat{1 & 0 & 0\\ \tau_1 & 1-\tau_1-\tau_2 & \tau_2 \\ 0 & 0 & 1}, \;\;
 \bar{B}=\bmat{-\alpha  & -\alpha & 0 \\ -\zeta & 0 & -\zeta \\ 0  & 0 & 0}.
\]
Based on the iteration above, it is straightforward to check that
Katyusha \eqref{eq:Kat1} is equivalent to the stochastic linear system
\eqref{eq:sys1} with
\begin{equation}
\label{eq:Katss}
\xi_k=\bmat{z_k-x_\star \\ y_k-x_\star \\ \tilde{x}-x_\star}, \quad w_k=\bmat{v_k  \\ g_k \\ h_k}.
\end{equation}

\subsection{Supply Rate Functions for Katyusha}
Katyusha extracts energy out of the system much faster than SVRG, as
can be shown by the use of more advanced supply rate functions.

\begin{lem}
\label{lem:SupplyKatyusha}
Let $\psi$ be $\sigma$-strongly convex with $\sigma>0$. Suppose $f_i$
is $L$-smooth and convex. Let $x_\star$ be the optimal point of $F$.
Define $X_1=\bar{X}_1 \otimes I_p$, where $\bar{X}_1$ is the following
sum of four matrices:
\begin{multline}
\bar{X}_1=-\bmat{ -\frac{\sigma\tau_1}{2} & 0 & 0 & \frac{\tau_1(\alpha\sigma+1)}{2} &  \frac{\tau_1(\alpha\sigma+1)}{2} & 0\\
0 & 0 & 0 & 0 & 0 & 0\\
0 & 0 & 0 & 0 & 0 & 0\\
\frac{\tau_1(\alpha\sigma+1)}{2} & 0 & 0 & 0 & 0 & 0\\
\frac{\tau_1(\alpha\sigma+1)}{2} & 0 & 0 & 0 & 0 & 0\\
0 & 0 & 0 & 0 & 0 & 0
}\\
+\left(\zeta-\frac{\alpha\tau_1}{2}-\frac{L\zeta^2(1+\tau_2)}{2\tau_2}\right)\bmat{ 0 & 0 & 0 & 0 &  0 & 0\\
0 & 0 & 0 & 0 & 0 & 0\\
0 & 0 & 0 & 0 & 0 & 0\\
0 & 0 & 0 & 1 & 0 & 1\\
0 & 0 & 0 & 0 & 0 & 0\\
0 & 0 & 0 & 1 & 0 & 1
}\\
+\frac{\alpha\tau_1(\alpha\sigma+1)}{2}\bmat{ 0 & 0 & 0 & 0 &  0 & 0\\
0 & 0 & 0 & 0 & 0 & 0\\
0 & 0 & 0 & 0 & 0 & 0\\
0 & 0 & 0 & 1 & 1 & 0\\
0 & 0 & 0 & 1 & 1 & 0\\
0 & 0 & 0 & 0 & 0 & 0
}\\
+\frac{\alpha\tau_1}{2}\bmat{ 0 & 0 & 0 & 0 &  0 & 0\\
0 & 0 & 0 & 0 & 0 & 0\\
0 & 0 & 0 & 0 & 0 & 0\\
0 & 0 & 0 & 0 & 0 & 0\\
0 & 0 & 0 & 0 & 1 & -1\\
0 & 0 & 0 & 0 & -1 & 1
}.
\end{multline}
Consider $\xi_k$ and $w_k$ defined by \eqref{eq:Katss}.
Suppose the supply rate $S_j$ is defined by \eqref{eq:supply} for $j=1$. 
Then the following supply rate condition holds for Katyusha
\begin{multline}
\label{eq:KatyushaSupply}
\mathbb{E}S_1(\xi_k, w_k) \le (1-\tau_1-\tau_2) (\mathbb{E}F(y_k)-F(x_\star))\\-(\mathbb{E}F(y_{k+1})-F(x_\star))+\tau_2 (\mathbb{E}F(\tilde{x})-F(x_\star)).
\end{multline}
\end{lem}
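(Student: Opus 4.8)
The plan is to obtain \eqref{eq:KatyushaSupply} by running the standard per-iteration Katyusha computation (cf.\ \citet[Sec.~3]{katyusha2016}) and matching its output to the quadratic form cut out by $\bar X_1$. First I would unpack that form. Writing the six blocks of $\bmat{\xi_k\\w_k}$ as $z_k-x_\star,\,y_k-x_\star,\,\tilde x-x_\star,\,v_k,\,g_k,\,h_k$ and expanding the four matrices that sum to $\bar X_1$ against these blocks gives
\begin{multline*}
S_1=\tfrac{\sigma\tau_1}{2}\norm{z_k-x_\star}^2-\tau_1(1+\alpha\sigma)(z_k-x_\star)^\tp(v_k+g_k)\\
+\bigl(\zeta-\tfrac{\alpha\tau_1}{2}-\tfrac{L\zeta^2(1+\tau_2)}{2\tau_2}\bigr)\norm{v_k+h_k}^2+\tfrac{\alpha\tau_1(1+\alpha\sigma)}{2}\norm{v_k+g_k}^2+\tfrac{\alpha\tau_1}{2}\norm{g_k-h_k}^2 .
\end{multline*}
Substituting $z_{k+1}-x_\star=(z_k-x_\star)-\alpha(v_k+g_k)$ to remove the inner product (this also exactly cancels the $\norm{v_k+g_k}^2$ term, since $\tfrac{\alpha\tau_1(1+\alpha\sigma)}{2}$ is precisely its coefficient) yields the identity $S_1=\tfrac{\tau_1(1+\alpha\sigma)}{2\alpha}\norm{z_{k+1}-x_\star}^2-\tfrac{\tau_1}{2\alpha}\norm{z_k-x_\star}^2+c\norm{v_k+h_k}^2+\tfrac{\alpha\tau_1}{2}\norm{g_k-h_k}^2$, where $c=\zeta-\tfrac{\alpha\tau_1}{2}-\tfrac{L\zeta^2(1+\tau_2)}{2\tau_2}$ and $\norm{v_k+h_k}^2=\zeta^{-2}\norm{y_{k+1}-x_{k+1}}^2$ by the $y$-update. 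So the lemma is a strengthening of the coupling bound \eqref{eq:coupling}: it retains the two extra square terms --- both nonnegative for the nominal choice $\tau_1\le\tfrac1{3\alpha L}$, $\zeta=\tfrac1{3L}$, $\tau_2=\tfrac12$ --- that \citet{katyusha2016} discards.

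The proof then assembles the usual ingredients and takes $\mathbb{E}_{i_k}$: (a) the proximal inequality for $z_{k+1}$ --- from the optimality condition of its subproblem together with $\sigma$-strong convexity of $\psi$, $v_k^\tp(z_{k+1}-x_\star)+\psi(z_{k+1})-\psi(x_\star)\le\tfrac1{2\alpha}\norm{z_k-x_\star}^2-\tfrac{1+\alpha\sigma}{2\alpha}\norm{z_{k+1}-x_\star}^2-\tfrac{\alpha}{2}\norm{v_k+g_k}^2$; (b) the gradient-progress bound for $y_{k+1}$ --- from the optimality condition of its $\tfrac{3L}{2}$-regularized subproblem with $L$-smoothness and convexity of $f$, bounding $F(y_{k+1})$ by $F(x_{k+1})$, a cross term involving $v_k-\nabla f(x_{k+1})$, and $\norm{y_{k+1}-x_{k+1}}^2$; (c) unbiasedness $\mathbb{E}_{i_k}v_k=\nabla f(x_{k+1})$ and the variance bound $\mathbb{E}_{i_k}\norm{v_k-\nabla f(x_{k+1})}^2\le 2L\bigl(f(\tilde x)-f(x_{k+1})-\nabla f(x_{k+1})^\tp(\tilde x-x_{k+1})\bigr)$ from co-coercivity of the $f_i$; (d) convexity of $f$ and of $\psi$ for reassembling $F=f+\psi$; and (e) the identity $\tau_1(x_{k+1}-z_k)=\tau_2(\tilde x-x_{k+1})+(1-\tau_1-\tau_2)(y_k-x_{k+1})$, obtained from the definition of $x_{k+1}$, which I use to split $v_k^\tp(z_{k+1}-x_\star)=v_k^\tp(z_{k+1}-z_k)+v_k^\tp(z_k-x_{k+1})+v_k^\tp(x_{k+1}-x_\star)$ and rewrite the middle term as a $\tfrac{\tau_2}{\tau_1}$- and $\tfrac{1-\tau_1-\tau_2}{\tau_1}$-weighted mixture of $v_k^\tp(x_{k+1}-\tilde x)$ and $v_k^\tp(x_{k+1}-y_k)$. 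Chaining (a)--(e) and collecting terms should leave exactly the four-term expression for $S_1$ on the left and precisely the right-hand side of \eqref{eq:KatyushaSupply} on the right; dropping the two nonnegative squares and dividing by $\tau_1$ then recovers \eqref{eq:coupling}.

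I expect the main obstacle to be the bookkeeping in the last step: the coefficients $\zeta-\tfrac{\alpha\tau_1}{2}-\tfrac{L\zeta^2(1+\tau_2)}{2\tau_2}$, $\tfrac{\alpha\tau_1(1+\alpha\sigma)}{2}$ and $\tfrac{\alpha\tau_1}{2}$ baked into $\bar X_1$ must come out exactly. In particular one must apply Young's inequality to the $(v_k-\nabla f(x_{k+1}))^\tp(\,\cdot\,)$ cross terms with just the right weights so that the $\norm{v_k+h_k}^2$ and $\norm{g_k-h_k}^2$ remainders appear with those exact coefficients and no slack is thrown away --- which is precisely the place where \citet{katyusha2016} instead discards terms and therefore obtains only the weaker inequality \eqref{eq:coupling}. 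The identity of the first paragraph is what makes the verification tractable, since it reduces the whole task to showing that the four-term average $\mathbb{E}\bigl[\tfrac{\tau_1(1+\alpha\sigma)}{2\alpha}\norm{z_{k+1}-x_\star}^2-\tfrac{\tau_1}{2\alpha}\norm{z_k-x_\star}^2+c\norm{v_k+h_k}^2+\tfrac{\alpha\tau_1}{2}\norm{g_k-h_k}^2\bigr]$ is bounded above by the right-hand side of \eqref{eq:KatyushaSupply}.
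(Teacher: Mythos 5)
Your proposal is correct and uses the same raw ingredients as the paper's proof---smoothness and convexity of $f$, unbiasedness of $v_k$, the variance bound of \citet[Lemma~3.4]{katyusha2016}, strong convexity of $\psi$ accessed through the subgradients $g_k,h_k$, and the convex-combination identity for $x_{k+1}$---but it enters from the opposite end, and that reorganization is worth noting. The paper never simplifies $S_1$: it splits the target $(\mathbb{E}F(y_{k+1})-F(x_\star))-(1-\tau_1-\tau_2)(\mathbb{E}F(y_k)-F(x_\star))-\tau_2(\mathbb{E}F(\tilde x)-F(x_\star))$ into an $f$-part and a $\psi$-part, upper-bounds each by inner products of the state-space signals (using the plain subgradient inequalities for $\psi$ at $y_{k+1}$ and $z_{k+1}$ rather than your three-point proximal lemma---an equivalent repackaging, since $v_k+g_k=\tfrac1\alpha(z_k-z_{k+1})$), and only at the very last step checks that the resulting sum of quadratic forms equals $-S_1$. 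You instead first telescope $S_1$ itself; your identity
\begin{align*}
S_1=\tfrac{\tau_1(1+\alpha\sigma)}{2\alpha}\norm{z_{k+1}-x_\star}^2-\tfrac{\tau_1}{2\alpha}\norm{z_k-x_\star}^2+c\norm{v_k+h_k}^2+\tfrac{\alpha\tau_1}{2}\norm{g_k-h_k}^2
\end{align*}
is correct (it matches the expansion of the four matrices, and multiplying by $\lambda_1=\alpha/\tau_1$ reproduces exactly the $\tfrac{\alpha}{2}\bigl(\alpha-\tfrac{1}{3L\tau_1}\bigr)$ and $\tfrac{\alpha^2}{2}$ coefficients in the main-text LMI, which is a useful cross-check). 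This buys an interpretation the paper leaves implicit: the lemma is the coupling inequality \eqref{eq:coupling} with the two discarded nonnegative squares retained, which explains at a glance why the LMI \eqref{eq:lmi1} is feasible precisely when $\tau_1\le\tfrac{1}{3\alpha L}$. The bookkeeping you defer does close---it is the same computation the paper performs when summing its quadratic forms---with two small cautions: your step (b) should be phrased for general $\zeta$ (i.e., $L$-smoothness of $f$ at $x_{k+1}$ plus the subgradient inequality for $\psi$ at $y_{k+1}$, not the specific $\tfrac{3L}{2}$-regularized subproblem), since the lemma is stated for the generalized iteration \eqref{eq:Kat1}; and if you route through $F(x_{k+1})$ as in Allen-Zhu, the intermediate $\psi(x_{k+1})$ terms must be tracked until they cancel, whereas the paper's $f$/$\psi$ decoupling avoids them entirely.
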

\begin{proof}
The proof is based on the strong-convexity of $\psi$, and the smoothness and convexity of $f_i$. The detailed proof is presented in the supplementary material.
\end{proof}

The physical interpretation for the above supply rate is as
follows. There is some hidden energy in the system that takes the form
of $F(y_k)- F(x_\star)$. There is also some initial energy in the form
of $F(\tilde{x})-F(x_\star)$. The above supply rate condition states
that the delivered energy is bounded by a weighted decrease of the
hidden energy plus some amount of the initial energy. Such a supply
rate can efficiently extract energy out of the systems due to its
coupling with the hidden energy and the initial energy.  The supply
rate construction in Lemma~\ref{lem:SupplyKatyusha} is quite similar
to the supply rate construction for Nesterov's accelerated method
\citep{BinHu2017}. From a physical viewpoint, the essential property
of momentum terms can extract the hidden energy out of the system in a
more efficient way.

\begin{rem}
Although the supply rate in Lemma \ref{lem:SupplyKatyusha} is complicated, there are some general guidelines for constructing and choosing supply rates. We discuss these guidelines in the supplementary materials.
\end{rem}

\subsection{Analysis of Katyusha Using Dissipativity}

Using the supply rate function in Lemma \ref{lem:SupplyKatyusha}, we can immediately recover the one-iteration result \eqref{eq:coupling} as follows. Suppose $\tau_2=\frac{1}{2}$ and $\zeta=\frac{1}{3L}$. We choose $\rho^2=\frac{1}{1+\alpha\sigma}$, $\lambda_1=\frac{\alpha}{\tau_1}$, and
\begin{align}
\label{eq:Pdef}
P=\frac{1+\alpha \sigma}{2}\bmat{1 & 0 & 0\\ 0 & 0 & 0 \\ 0 & 0 & 0}\otimes I_p.
\end{align}
Then the left-hand side of the LMI condition \eqref{eq:lmi1} becomes
\begin{multline*}
\frac{\alpha}{2}\left(\alpha-\frac{1}{3L\tau_1}\right) \bmat{ 0 & 0 & 0 & 0 &  0 & 0\\
0 & 0 & 0 & 0 & 0 & 0\\
0 & 0 & 0 & 0 & 0 & 0\\
0 & 0 & 0 & 1 & 0 & 1\\
0 & 0 & 0 & 0 & 0 & 0\\
0 & 0 & 0 & 1 & 0 & 1}
\otimes I_p\\
+ \frac{\alpha^2}{2}\bmat{ 0 & 0 & 0 & 0 &  0 & 0\\
0 & 0 & 0 & 0 & 0 & 0\\
0 & 0 & 0 & 0 & 0 & 0\\
0 & 0 & 0 & 0 & 0 & 0\\
0 & 0 & 0 & 0 & -1 & 1\\
0 & 0 & 0 & 0 & 1 & -1
} \otimes I_p,
\end{multline*}
which is clearly negative semidefinite when $\tau_1\le \frac{1}{3\alpha L}$.

Therefore, we can apply Theorem~\ref{thm:DI} to prove that 
\begin{multline*}
\frac{1+\alpha \sigma}{2}\mathbb{E}\norm{z_{k+1}-x_\star}^2-\frac12 \mathbb{E}\norm{z_k-x_\star}^2\\
\le \frac{\alpha}{\tau_1} \mathbb{E}S(\xi_k, w_k).
\end{multline*}
From the supply rate condition \eqref{eq:KatyushaSupply}, we
immediately recover the one-iteration analysis result
\eqref{eq:coupling}, which can be easily transferred into the
iteration complexity result by applying the telescoping trick in
\citet{katyusha2016}.

We emphasize that Lemma~\ref{lem:SupplyKatyusha} works for general
choices of $\zeta$ and $\tau_2$. Due to the generality of
Lemma~\ref{lem:SupplyKatyusha}, our LMI approach can be used to
generalize \citep[Lemma~3.7]{katyusha2016} for many more choices of
$(\tau_1, \tau_2)$ . This could lead to other choices of $(\tau_1,
\tau_2, \alpha, \zeta)$, which yields the same accelerated iteration
complexity. However, those choices of parameters will at most improve
the iteration complexity by a constant factor.  For example, consider
$\zeta=\frac{1}{3L}$ and any $\tau_2\ge\frac{1}{5}$. We can still
choose $\rho^2=\frac{1}{1+\alpha\sigma}$,
$\lambda_1=\frac{\alpha}{\tau_1}$, and $P$ as defined in
\eqref{eq:Pdef} to prove \eqref{eq:coupling}. In this case, the
left-hand side of the LMI condition \eqref{eq:lmi1} becomes
\begin{multline*}
\frac{\alpha}{2}\left(\alpha-\frac{5\tau_2-1}{9L\tau_1\tau_2}\right) \bmat{ 0 & 0 & 0 & 0 &  0 & 0\\
0 & 0 & 0 & 0 & 0 & 0\\
0 & 0 & 0 & 0 & 0 & 0\\
0 & 0 & 0 & 1 & 0 & 1\\
0 & 0 & 0 & 0 & 0 & 0\\
0 & 0 & 0 & 1 & 0 & 1}
\otimes I_p\\
+ \frac{\alpha^2}{2}\bmat{ 0 & 0 & 0 & 0 &  0 & 0\\
0 & 0 & 0 & 0 & 0 & 0\\
0 & 0 & 0 & 0 & 0 & 0\\
0 & 0 & 0 & 0 & 0 & 0\\
0 & 0 & 0 & 0 & -1 & 1\\
0 & 0 & 0 & 0 & 1 & -1
} \otimes I_p,
\end{multline*}
which is clearly negative semidefinite when $\tau_1\le \frac{5\tau_2-1}{9\alpha L\tau_2}$. Therefore, the one-iteration convergence result \eqref{eq:coupling} holds for any $\frac{1}{5}\le \tau_2 <1$ and $\tau_1\le \min\{\frac{5\tau_2-1}{9\alpha L\tau_2}, 1-\tau_2\}$.
This generalizes the coupling lemma \citep[Lemma 3.7]{katyusha2016} to more general choices of $(\tau_1, \tau_2)$.
Based on this, one can use the telescoping trick to show Katyusha with $\tau_2 \neq \frac{1}{2}$ can also achieve the iteration complexity of $\mathcal{O}\left((\sqrt{\frac{Ln}{\sigma}}+n)\log(\frac{1}{\epsilon})\right)$.

\section{Future Work}

We plan to use our techniques to
study optimal tuning of Katyusha X \citep{allen2018katyusha} for ERM
problems in which the component functions $f_i$ are not individually
convex. In addition, we are interested in investigating how to  accelerate other recently-developed stochastic methods such as SARAH \citep{nguyen17b} using our LMI approach. It is also important to extend our control framework for understanding other accelerating mechanism such as catalyst \citep{lin2015}.

Notice that deterministic continuous-time algorithms have also been understood as dissipative dynamical systems  \citep{attouch2000heavy, alvarez2002second, BinHu2017}. It is possible that one can modify the proposed framework to study stochastic continuous-time dynamics. This is another important future direction.\newpage

\section*{Acknowledgments}

Bin Hu and Laurent Lessard are supported by the National Science Foundation (NSF) under Grants No. 1656951 and 1750162.  Bin Hu and Laurent Lessard also
acknowledge support from the Wisconsin Institute for Discovery, the
College of Engineering, and the Department of Electrical and Computer
Engineering at the University of Wisconsin--Madison. Stephen Wright
was supported by NSF Awards IIS-1447449, 1628384, 1634597, and
1740707; AFOSR Award FA9550-13-1-0138; Subcontracts 3F-30222 and
8F-30039 from Argonne National Laboratory; and DARPA Award
N660011824020.

\bibliography{dissipativity_stochastic}
\bibliographystyle{icml2018}

\clearpage
\setcounter{section}{0}
\renewcommand{\thesection}{\Alph{section}}

\onecolumn
\begin{center}
{\Large\bf Supplementary Material}
\end{center}

The underlying probability space for the sampling index $i_k$ is
denoted by $(\Omega, \mathcal{F}, \mathbb{P})$. We denote by
$\mathcal{F}_k$ the $\sigma$-algebra generated by $(i_0, i_1, \ldots,
i_k)$. Clearly, $i_k$ is $\mathcal{F}_k$-adapted and we obtain a
filtered probability space $(\Omega, \mathcal{F}, \{\mathcal{F}_k\},
\mathbb{P})$ on which the stochastic optimization method is defined.

\section{Proof of Lemma \ref{lem:4}}
The proof is straightforward and included here only for
completeness. Note that $x_k$ does not depend on $i_k$, so we have
$\mathbb{E}\left[(x_k-x_\star)^\tp \nabla f_{i_k}(x_k)
  \middle|\ \mathcal{F}_{k-1}\right]=(x_k-x_\star)^\tp \nabla g(x_k)$.
If $g$ is $\sigma$-strongly convex, we directly have
\begin{align*}
\mathbb{E}\left[ \bmat{ x_k-x_\star \\ \nabla f_{i_k}(x_k) }^\tp \left(\bmat{ 2\sigma & -1\\ -1 &  0} \otimes I_p\right) \bmat{ x_k-x_\star \\ \nabla f_{i_k}(x_k) } \right]=\mathbb{E}\left[ \bmat{ x_k-x_\star \\ \nabla g(x_k) }^\tp \left(\bmat{ 2\sigma & -1\\ -1 &  0} \otimes I_p\right) \bmat{ x_k-x_\star \\ \nabla g(x_k) } \right] \le 0.
\end{align*}
Next, if $f_i$ is convex and $L$-smooth, the co-coercivity property implies
\begin{align*}
\bmat {x_k-x_\star \\ \nabla f_i(x_k)-\nabla f_i(x_\star)}^\tp
\left(\bmat{ 0 &  -L \\ -L & 2}\otimes I_p\right)
\bmat{ x_k-x_\star \\ \nabla f_i(x_k)-\nabla f_i(x_\star)}\le 0.
\end{align*}
Therefore, we have
\begin{align*}
&\mathbb{E} \left(\bmat{ x_k-x_\star \\ \nabla f_{i_k}(x_k) }^\tp
\left(\bmat{ 0 &  -L \\ -L & 1}\otimes I_p\right)
\bmat{ x_k-x_\star \\ \nabla f_{i_k}(x_k) }\,\,\middle|\ \, \mathcal{F}_{k-1}\right) \\
&= \frac{1}{n} \sum_{i=1}^n \bmat{ x_k-x_\star \\ \nabla  f_i(x_k)}^\tp \left(\bmat{ 0 &  -L \\ -L & 0}\otimes I_p\right)
\bmat{ x_k-x_\star \\ \nabla  f_i(x_k)}+ \frac{1}{n} \sum_{i=1}^n \norm{\nabla f_i(x_k)}^2\\
&\le -\frac{2}{n}\sum_{i=1}^n \norm{\nabla f_i(x_k)-\nabla f_i(x_\star)}^2+\frac{1}{n} \sum_{i=1}^n \norm{\nabla f_i(x_k)}^2\\
&\le \frac{2}{n}\sum_{i=1}^n \norm{\nabla f_i(x_\star)}^2.
\end{align*}
Taking the expectation of the above inequality leads to the desired conclusion.

\section{Proof of Lemma \ref{lem:supply_SVRGI} and Lemma \ref{lem:supply_SVRGII}}

We summarize some existing function inequalities that can be used to directly show Lemma \ref{lem:supply_SVRGI} and Lemma \ref{lem:supply_SVRGII}.

\begin{lemm}
\label{lem:SQClemma}
Assume $\nabla g(x_\star)=0$. Suppose $i_k$ is uniformly sampled from
$\{1,\ldots, n\}$ in an i.i.d. manner. Let $\{x_k:k=0,1,\ldots\}$ be
an $\mathcal{F}_{n}$- predictable process whose sample path satisfies
$x_k\in \R^p$ almost surely. In addition, $r_k=\nabla
f_{i_k}(x_k)-\nabla f_{i_k}(x_\star)$ and $u_k=\nabla
f_{i_k}(x_\star)-\nabla f_{i_k} (\tilde{x})+\nabla g(\tilde{x})$, where
$\tilde{x}$ is $\mathcal{F}_0$-measurable.
\begin{enumerate}
\item The following always holds due to the uniform sampling strategy:
\begin{align}\tag{S1}
\label{eq:SQC1}
\mathbb{E}\left[(x_k-x_\star)^\tp (\nabla f_{i_k}(x_\star)-\nabla f_{i_k}(\tilde{x})+\nabla g(\tilde{x}))\right]=0.
\end{align}

\item If $f_i$ is $L$-smooth, then 
\begin{align}\tag{S2}
\label{eq:SQC2}
\mathbb{E} \|\nabla f_{i_k}(x_\star)-\nabla f_{i_k} (\tilde{x})+\nabla g(\tilde{x})\|^2 \le L^2\mathbb{E}\|\tilde{x}-x_\star\|^2.
\end{align}

\item If $f_i$ is convex and $L$-smooth, then
\begin{align}\tag{S3}
\label{eq:SQC5}
&\mathbb{E}\|\nabla f_{i_k}(x_k)-\nabla f_{i_k}(x_\star)\|^2 \le 2L(\mathbb{E}g(x_k)-g(x_\star)),\\
\tag{S4}\label{eq:SQC5b}
&\mathbb{E}\|\nabla f_{i_k}(x_\star)-\nabla f_{i_k}(\tilde{x})+\nabla g(\tilde{x})\|^2 \le 2L(\mathbb{E}g(\tilde{x})-g(x_\star)).
\end{align}

\item 
 The following inequality holds
\begin{align}\tag{S5}
\label{eq:SQC3}
\mathbb{E}\left[ \bmat{ x_k-x_\star \\ r_k }^\tp (M\otimes I_p)  \bmat{ x_k-x_\star \\ r_k }\right]\le 0,
\end{align}
where $M$ is computed according to the assumption on $f_i$ as follows
\begin{align}\tag{S6}
  \label{eq:Mdef}
  M \defeq  \left\{
    \begin{array}{ll}
      \bmat{2\sigma L & -(\sigma+L) \\ -(\sigma+L) & 2} & \mbox{if } f_i \mbox{is } L\mbox{-smooth and } \sigma\mbox{-strongly convex},\\[3mm]
      \bmat{0 & -L \\ -L & 2} & \mbox{if }  f_i \mbox{is } L\mbox{-smooth and convex},\\[3mm]
     \bmat{-2L^2 & 0 \\ 0 & 2} & \mbox{if } f_i \mbox{ is } L\mbox{-smooth.} 
    \end{array}
  \right.
\end{align}

\item If $g$ is $\sigma$-strongly convex, we have
\begin{align}\tag{S7}
\label{eq:SQC4}
\mathbb{E}\left[ \bmat{ x_k-x_\star \\ r_k }^\tp \left(\bmat{ 2\sigma & -1\\ -1 &  0} \otimes I_p\right) \bmat{ x_k-x_\star \\ r_k } \right] \le 0.
\end{align}

\item If $g$ is convex, then
\begin{align}\tag{S8}
\label{eq:SQC6}
\mathbb{E} \left[(x_k-x_\star)^\tp (\nabla f_{i_k}(x_k)-\nabla f_{i_k} (\tilde{x})+\nabla g(\tilde{x}))\right]\ge \mathbb{E}g(x_k)-g(x_\star).
\end{align}

\item  If $g$ is $\sigma$-strongly convex, then
\begin{align}\tag{S9}
\label{eq:g_strongconvex}
\mathbb{E}\|\tilde{x}-x_\star\|^2\le \frac{2}{\sigma}\left(\mathbb{E}g(\tilde{x})-g(x_\star)\right).
\end{align}

\end{enumerate}
\end{lemm}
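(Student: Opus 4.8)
The plan is to obtain each of (S1)--(S9) by pairing a pointwise, deterministic inequality from convex analysis with two elementary facts about the uniform i.i.d.\ index $i_k$: conditionally on the sampling history $\mathcal{F}_{k-1}$ (with respect to which $x_k$ is measurable, and $\tilde{x}$ is even $\mathcal{F}_0$-measurable), (i) $\mathbb{E}[\nabla f_{i_k}(z)\mid\mathcal{F}_{k-1}]=\frac1n\sum_{i=1}^n\nabla f_i(z)=\nabla g(z)$ for any $\mathcal{F}_{k-1}$-measurable $z$, and (ii) $\mathbb{E}[\norm{\nabla f_{i_k}(a)-\nabla f_{i_k}(b)}^2\mid\mathcal{F}_{k-1}]=\frac1n\sum_{i=1}^n\norm{\nabla f_i(a)-\nabla f_i(b)}^2$ for $\mathcal{F}_{k-1}$-measurable $a,b$. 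In each case the recipe is the same: write the quantity inside the expectation, condition on $\mathcal{F}_{k-1}$, use (i)--(ii) to turn the stochastic gradient into a finite average of terms each obeying the relevant deterministic inequality, and take total expectation by the tower property. The $\otimes I_p$ factors are irrelevant, since all quadratic forms in question decompose blockwise over $\R^p$.

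Several items are then immediate. For (S1), fact (i) and $\nabla g(x_\star)=0$ give $\mathbb{E}[\nabla f_{i_k}(x_\star)-\nabla f_{i_k}(\tilde{x})+\nabla g(\tilde{x})\mid\mathcal{F}_{k-1}]=\nabla g(x_\star)=0$, so the unconditional expectation vanishes. For (S8), the same reduction shows the conditional expectation of the inner product equals $(x_k-x_\star)^\tp\nabla g(x_k)$, which is at least $g(x_k)-g(x_\star)$ by convexity of $g$. For (S9), evaluating $\sigma$-strong convexity of $g$ at $x_\star$ (with $\nabla g(x_\star)=0$) gives $g(\tilde{x})\ge g(x_\star)+\frac{\sigma}{2}\norm{\tilde{x}-x_\star}^2$; rearranging and taking expectations finishes it. For the quadratic supply-rate conditions (S5) and (S7), I would verify that each matrix $M$ in \eqref{eq:Mdef} encodes exactly the standard quadratic constraint for its class at the pair $(x_k,x_\star)$: Lipschitz continuity of $\nabla f_i$ when $f_i$ is only $L$-smooth, co-coercivity when $f_i$ is $L$-smooth and convex, and the Nesterov interpolation inequality $(x_k-x_\star)^\tp(\nabla f_i(x_k)-\nabla f_i(x_\star))\ge\frac{\sigma L}{\sigma+L}\norm{x_k-x_\star}^2+\frac{1}{\sigma+L}\norm{\nabla f_i(x_k)-\nabla f_i(x_\star)}^2$ when $f_i$ is $L$-smooth and $\sigma$-strongly convex; for (S7) one instead uses $\sigma$-strong convexity of $g$ together with $\mathbb{E}[r_k\mid\mathcal{F}_{k-1}]=\nabla g(x_k)$. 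Each holds pointwise in $i$, so averaging conditionally and taking total expectation yields (S5) and (S7).

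The one genuinely substantive step is the variance-reduction bound behind (S2) and (S4), with (S3) a simpler variant. The point is that the SVRG correction $u_k=\nabla f_{i_k}(x_\star)-\nabla f_{i_k}(\tilde{x})+\nabla g(\tilde{x})$ is a \emph{centered} random vector: setting $Y_k\defeq\nabla f_{i_k}(\tilde{x})-\nabla f_{i_k}(x_\star)$, fact (i) and $\nabla g(x_\star)=0$ give $u_k=-(Y_k-\mathbb{E}[Y_k\mid\mathcal{F}_{k-1}])$, hence by ``variance $\le$ second moment'' $\mathbb{E}[\norm{u_k}^2\mid\mathcal{F}_{k-1}]=\mathbb{E}[\norm{Y_k}^2\mid\mathcal{F}_{k-1}]-\norm{\mathbb{E}[Y_k\mid\mathcal{F}_{k-1}]}^2\le\mathbb{E}[\norm{Y_k}^2\mid\mathcal{F}_{k-1}]=\frac1n\sum_{i=1}^n\norm{\nabla f_i(\tilde{x})-\nabla f_i(x_\star)}^2$. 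Bounding this last sum termwise by $L^2\norm{\tilde{x}-x_\star}^2$ via $L$-smoothness gives (S2); bounding it instead by $\frac{2L}{n}\sum_i\big(f_i(\tilde{x})-f_i(x_\star)-\nabla f_i(x_\star)^\tp(\tilde{x}-x_\star)\big)=2L(g(\tilde{x})-g(x_\star))$ --- using the standard smooth-convex estimate $\norm{\nabla f_i(x)-\nabla f_i(x_\star)}^2\le 2L\big(f_i(x)-f_i(x_\star)-\nabla f_i(x_\star)^\tp(x-x_\star)\big)$ and $\nabla g(x_\star)=0$ --- gives (S4); and the same smooth-convex estimate applied at $x=x_k$ (no centering needed, since $\mathbb{E}[\norm{r_k}^2\mid\mathcal{F}_{k-1}]=\frac1n\sum_i\norm{\nabla f_i(x_k)-\nabla f_i(x_\star)}^2$ already) gives (S3). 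Taking total expectations throughout completes all three. This lemma is deliberately a compendium of known facts, so there is no deep obstacle; the only ingredient beyond directly quoting textbook smoothness/convexity inequalities is this recognition of the variance-reduced estimator as a centered vector together with the ``variance $\le$ second moment'' reduction, which I regard as the crux of the otherwise routine bookkeeping.
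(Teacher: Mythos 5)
Your proposal is correct and follows essentially the same route as the paper's proof: condition on the sampling history so that the stochastic gradient averages to $\nabla g$, use the ``variance $\le$ second moment'' reduction for the centered correction term in (S2)/(S4), and obtain the remaining statements by taking expectations of the standard pointwise smoothness/convexity inequalities (Lipschitz, co-coercivity, interpolation, strong monotonicity). The paper only writes out (S1) and (S2) explicitly and defers the rest to ``well known function inequalities''; your write-up supplies exactly those details, including the correct identification of which deterministic quadratic constraint each matrix $M$ in \eqref{eq:Mdef} encodes.
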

\begin{proof}
The proof is standard and based on the fact that $i_k$ and $x_k$ are independent. For example, we have
\begin{align*}
\mathbb{E}\left[(x_k-x_\star)^\tp (\nabla f_{i_k}(x_\star)-\nabla f_{i_k}(\tilde{x})+\nabla g(\tilde{x}))
    \middle|\ \mathcal{F}_{k-1}\right]=(x_k-x_\star)^\tp \nabla g(x_\star)=0,
\end{align*}
which directly leads to Statement 1. Note that $\mathbb{E}\left[\nabla
  f_{i_k}(x_\star)-\nabla f_{i_k}(\tilde{x}) \right]=-\mathbb{E}\nabla
g(\tilde{x})$. Hence, we have
\begin{align*}
\mathbb{E} \|\nabla f_{i_k}(x_\star)-\nabla f_{i_k} (\tilde{x})+\nabla g(\tilde{x})\|^2 \le\mathbb{E} \|\nabla f_{i_k}(x_\star)-\nabla f_{i_k} (\tilde{x})\|^2\le L^2\mathbb{E}\|\tilde{x}-x_\star\|^2,
\end{align*}
which proves Statement 2. The other statements follow from taking
expectations of well known function inequalities.
\end{proof}

The proofs of Lemma \ref{lem:supply_SVRGI} and Lemma \ref{lem:supply_SVRGII} directly follow from the lemma
above.

\section{Further Discussion on SVRG}

One can automate the convergence analysis for SVRG under various
assumptions on $f_i$.  For example, consider the analysis of SVRG with
Option I. If $f_i$ is assumed only to be $L$-smooth, we can modify
$\bar{X}_3$ in Lemma \ref{lem:supply_SVRGI} as
\begin{align*}
\bar{X}_3=\bmat{ -2L^2 & 0 & 0\\ 0 & 2 & 0 \\ 0 & 0 & 0}.
\end{align*}
We still assume that $g$ is $L$-smooth and $\sigma$-strongly convex,
so we choose $\bar{X}_1$, $\bar{X}_2$, and $\bar{X}_4$ as in Lemma
5. For these choices, it is still true that $\mathbb{E}S_1\le L^2
\mathbb{E}\norm{\tilde{x}-x_\star}^2$, $\mathbb{E} S_2\le 0$,
$\mathbb{E} S_3\le 0$, and $\mathbb{E} S_4=0$. The usual analysis
route leads to the following bound:
\begin{align*}
\mathbb{E}\|x_m-x_\star\|^2\le \left((1-2\sigma\eta+2 L^2\eta^2)^m+\frac{\eta L^2}{\sigma -\eta L^2} \right) \mathbb{E}\|x_0-x_\star\|^2.
\end{align*}
This example demonstrates that one can modify the supply rate
functions to reflect various assumptions on the cost functions. For
SVRG with Option II, one can perform similar LMI analysis when the
assumptions on $f_i$ are changed.

\section{Proof of Lemma \ref{lem:SupplyKatyusha}}

We first set
\begin{align}\tag{S10}
 q_k = \bmat{\tau_1& 1-\tau_1-\tau_2 & \tau_2}\bmat{z_k \\ y_k \\ \tilde{x}}.
 \end{align}

From the definition of Katyusha, we have $\mathbb{E} v_k=\mathbb{E} \nabla f(q_k)$. Since $f$ is $L$-smooth and convex, it is straightforward to verify the following:
\begin{subequations}
\label{eq:ineq_first1}
\begin{align}\tag{S11}
\mathbb{E}f(q_k)-\mathbb{E}f(y_k)&\le \mathbb{E}\nabla f(q_k)^\tp (q_k-y_k) = \mathbb{E}\big[ \mathbb{E}[v_k^\tp (q_k-y_k)| \mathcal{F}_{i_{k-1}}]\big]=\mathbb{E} v_k^\tp (q_k-y_k),\\
\tag{S12}
\mathbb{E} f(q_k)-\mathbb{E}f(x_\star)&\le \mathbb{E}\nabla f(q_k)^\tp (q_k-x_\star) = \mathbb{E}v_k^\tp (q_k-x_\star),\\
\mathbb{E}f(y_{k+1})-\mathbb{E}f(q_k)&\le \mathbb{E}\left[ \nabla f(q_k)^\tp (y_{k+1}-q_k)+\frac{L}{2}\norm{y_{k+1}-q_k}^2\right]\notag\\
&=\mathbb{E}\left[(\nabla f(q_k)-v_k)^\tp(y_{k+1}-q_k)+v_k^\tp(y_{k+1}-q_k)+\frac{L}{2}\norm{y_{k+1}-q_k}^2\right]\notag\\
&\le \frac{\tau_2}{2L}\mathbb{E}\norm{v_k-\nabla f(q_k)}^2+\frac{L}{2}\left(1+\frac{1}{\tau_2}\right)\mathbb{E}\norm{y_{k+1}-q_k}^2+\mathbb{E} v_k^\tp(y_{k+1}-q_k)\notag\\
&\le \tau_2(\mathbb{E}f(\tilde{x})-\mathbb{E}f(q_k)-\mathbb{E}v_k^\tp (\tilde{x}-q_k))+\frac{L}{2}\left(1+\frac{1}{\tau_2}\right)\mathbb{E}\norm{y_{k+1}-q_k}^2+\mathbb{E}v_k^\tp(y_{k+1}-q_k),\tag{S13}
\end{align}
\end{subequations}
where the second-last inequality follows from the identity $a^Tb \le \tfrac12 \|a\|^2 + \tfrac12 \|b\|^2$, and 
the final step follows from the so-called variance upper bound in the literature (Lemma~3.4 of \cite{katyusha2016}).

To prove Lemma \ref{lem:SupplyKatyusha}, we need to show that 
\begin{align}\tag{S14}
\label{eq:sup1}
(\mathbb{E}F(y_{k+1})-F(x_\star))-(1-\tau_1-\tau_2) (\mathbb{E}F(y_k)-F(x_\star))-\tau_2 (\mathbb{E}F(\tilde{x})-F(x_\star))\le -\mathbb{E} S_1(\xi_k, w_k).
\end{align}
For brevity, define $\tilde\tau \defeq 1-\tau_1-\tau_2$. The left side of \eqref{eq:sup1} can be rewritten as
\begin{align}
  \nonumber
  &(\mathbb{E}F(y_{k+1})-F(x_\star))-(1-\tau_1-\tau_2) (\mathbb{E}F(y_k)-F(x_\star))-\tau_2 (\mathbb{E}F(\tilde{x})-F(x_\star))\\
  \nonumber
  &= \mathbb{E} f(y_{k+1})+\mathbb{E} \psi(y_{k+1})-\tilde\tau \mathbb{E} f(y_k)-\tilde\tau \mathbb{E} \psi(y_k)-\tau_1 f(x_\star)-\tau_1\psi(x_\star)-\tau_2 \mathbb{E} f(\tilde{x})-\tau_2 \mathbb{E} \psi(\tilde{x})\\
  \label{eq:s12a}
 &= \big(\mathbb{E} f(y_{k+1})-\tilde\tau \mathbb{E} f(y_k)-\tau_1 f(x_\star)-\tau_2 \mathbb{E} f(\tilde{x})\big)+\big(\mathbb{E} \psi(y_{k+1})-\tilde\tau \mathbb{E} \psi(y_k)-\tau_1\psi(x_\star)-\tau_2 \mathbb{E} \psi(\tilde{x})\big).\tag{S15}
\end{align}
We have decoupled the left side of \eqref{eq:sup1} into the sum of two
terms, the first involving only $f$, and the second involving only
$\psi$. We will use the properties of $f$ and $\psi$ to provide upper
bounds in the quadratic forms for the first and second terms,
respectively.

Bounding the first term in (S15), we obtain
\begin{align}
&\mathbb{E} f(y_{k+1})-\tilde\tau \mathbb{E} f(y_k)-\tau_1 f(x_\star)-\tau_2 \mathbb{E} f(\tilde{x}) \notag\\
&= \mathbb{E} \left[f(y_{k+1})-f(q_k)+\tau_2 (f(q_k)-f(\tilde{x}))+\tau_1(f(q_k)-f(x_\star))+\tilde\tau(f(q_k)-f(y_k))\right] \notag\\
&\le \frac{L}{2}\left(1+\frac{1}{\tau_2}\right)\mathbb{E}\norm{y_{k+1}-q_k}^2+\mathbb{E} v_k^\tp(y_{k+1}-q_k)+\tau_2\mathbb{E}v_k^\tp(q_k-\tilde{x})+\tau_1\mathbb{E}v_k^\tp (q_k-x_\star)+\tilde\tau \mathbb{E} v_k^\tp (q_k-y_k),\tag{S16}\label{eq:ineq_first2}
\end{align}
where the last step follows from the three bounds (S11), (S12), and
(S13).
Next, strong convexity of $\psi$ leads to an upper bound for the second term in \eqref{eq:s12a}:
\begin{align}
\label{eq:ineq_second1}
&\mathbb{E} \psi(y_{k+1})-\tilde\tau \mathbb{E} \psi(y_k)-\tau_1\psi(x_\star)-\tau_2 \mathbb{E} \psi(\tilde{x}) \notag \\
&=\mathbb{E}\left[\tilde\tau (\psi(y_{k+1})-\psi(y_k))+\tau_1(\psi(y_{k+1})-\psi(z_{k+1}))+\tau_1(\psi(z_{k+1})-\psi(x_\star))+\tau_2(\psi(y_{k+1})-\psi(\tilde{x}))\right] \notag \\
&\le \mathbb{E}\big[\tilde\tau\,h_k^\tp(y_{k+1}-y_k)+\tau_1\, h_k^\tp(y_{k+1}-z_{k+1})+\tau_1\left(g_k^\tp(z_{k+1}-x_\star)-\frac{\sigma}{2}\norm{z_{k+1}-x_\star}^2\right)+\tau_2\,h_k^\tp(y_{k+1}-\tilde{x})\big].\tag{S17}
\end{align}
Combining \eqref{eq:ineq_first2}--\eqref{eq:ineq_second1}, we see that the left side of \eqref{eq:sup1} is bounded above by the expected value of the following sum:
\begin{multline}\label{combosum}
\frac{L}{2}\left(1+\frac{1}{\tau_2}\right)\norm{y_{k+1}-q_k}^2+ v_k^\tp(y_{k+1}-q_k)+\tau_2\,v_k^\tp(q_k-\tilde{x})+\tau_1\,v_k^\tp (q_k-x_\star)+\tilde\tau\, v_k^\tp (q_k-y_k)\\
+ \tilde\tau\,h_k^\tp(y_{k+1}-y_k) + \tau_1\,h_k^\tp(y_{k+1}-z_{k+1})+\tau_1\left(g_k^\tp(z_{k+1}-x_\star)-\frac{\sigma}{2}\norm{z_{k+1}-x_\star}^2\right)+\tau_2\,h_k^\tp(y_{k+1}-\tilde{x}).\tag{S18}
\end{multline}

All terms in~\eqref{combosum} are actually quadratic forms,  due to the state-space model:
\begin{align*}
\bmat{z_{k+1}-x_\star \\ y_{k+1}-x_\star \\ \tilde{x} - x_\star}
&=\bmat{1 & 0 & 0\\ \tau_1 & \tilde\tau & \tau_2 \\ 0 & 0 & 1} \bmat{z_k-x_\star \\ y_k-x_\star \\ \tilde{x}-x_\star}+\bmat{-\alpha & -\alpha & 0 \\ -\zeta & 0 & -\zeta \\ 0  & 0 & 0}\bmat{v_k  \\ g_k \\ h_k }, \\
q_k-x_\star &= \bmat{\tau_1& \tilde\tau & \tau_2}\bmat{z_k-x_\star \\ y_k-x_\star \\ \tilde{x}-x_\star},
\end{align*}
where we recall the definition $\tilde\tau\defeq 1-\tau_1-\tau_2$.
For example, the term $v_k^\tp(y_{k+1}-q_k)$ is equivalent to the quadratic form:
\begin{align*}
\bmat{z_k-x_\star \\ y_k-x_\star \\ \tilde{x}-x_\star \\ v_k \\ g_k \\ h_k}^\tp   \left( \bmat{0 & 0 & 0 & 0 & 0 & 0 \\ 0 & 0 & 0 & 0 & 0 & 0 \\0 & 0 & 0 & 0 & 0 & 0 \\ 0 & 0 & 0 & -\zeta & 0 & -\frac{\zeta}{2} \\ 0 & 0 & 0 & 0 & 0 & 0 \\ 0 & 0 & 0 & -\frac{\zeta}{2} & 0 & 0 } \otimes I_p  \right)   \bmat{z_k-x_\star \\ y_k-x_\star \\ \tilde{x}-x_\star \\ v_k \\ g_k \\ h_k}.
\end{align*}
Summing all the these quadratic forms directly yields the desired supply rate.

\section{Guidelines for Constructing and Choosing Supply Rates}

In most cases, supply rates may be constructed by manipulating well-known quadratic inequalities. One can see this in the proof of Lemma \ref{lem:supply_SVRGI} and Lemma \ref{lem:supply_SVRGII}.
For momentum methods, the supply rate construction is more involved. One typically needs to regroup terms carefully after adding and subtracting $f(q_k)$, where $q_k$ is the input to the stochastic gradient. See \eqref{eq:ineq_first2} for such an example.  We note that it is possible for different supply rate functions to yield the same iteration complexity bound. It is also possible to construct other supply rate functions that yield a constant-factor improvement for the convergence guarantees of Katyusha. In the present work, we only provide one supply rate for the analysis of Katyusha.

The selections of supply rate functions for a particular algorithm can be guided by the numerical solutions of the proposed LMIs. For example, one could include several candidate supply rates with associated multipliers $\lambda_j$ in the LMI to identify which supply rate functions are needed to obtain the desired rate bound.

\end{document}